\numberwithin{equation}{section}
\newcommand{\Z}{\mathbb{Z}}
\newcommand{\Q}{\mathbb{Q}}
\newcommand{\R}{\mathbb{R}}
\newcommand{\OO}{\mathcal{O}}
\newcommand\Vol{\mathrm{Vol}}
\newcommand\Gal{\mathrm{Gal}}
\newcommand\Norm{\mathrm{N}}
\renewcommand\epsilon{\varepsilon}
\newtheorem{lemma}{Lemma}[section]
\newtheorem{theorem}[lemma]{Theorem}
\newtheorem{proposition}[lemma]{Proposition}
\newtheorem{corollary}[lemma]{Corollary}
\newtheorem{mydef}[lemma]{Definition}
\newtheorem*{remark}{{\it Remark}}
\title{Bounds for moments of $\ell$-torsion in class groups}
\author{Peter Koymans}
\address{Institute for Theoretical Studies, ETH Zurich, 8092 Zurich, Switzerland} 
\email{\href{mailto:peter.koymans@eth-its.ethz.ch}{peter.koymans@eth-its.ethz.ch}}
\author{Jesse Thorner}
\address{Department of Mathematics, University of Illinois, Urbana, IL 61801, USA}
\email{\href{mailto:jesse.thorner@gmail.com}{jesse.thorner@gmail.com}}
\begin{document}

\maketitle

\begin{abstract}
Fix a number field $k$, integers $\ell, n \geq 2$, and a prime $p$.  For all $r \geq 1$, we prove strong unconditional upper bounds on the $r$-th moment of $\ell$-torsion in the ideal class groups of degree $p$ extensions of $k$ and of degree $n$ $S_n$-extensions of $k$, improving upon results of Ellenberg, Pierce and Wood as well as GRH-conditional results of Frei and Widmer.  For large $r$, our results are comparable with work of Heath-Brown and Pierce for imaginary quadratic extensions of $\mathbb{Q}$.  When $r=1$, our results are new even for the family of all quadratic extensions of $\mathbb{Q}$, leading to an improved upper bound for the count of degree $p$ $D_p$-extensions over $\mathbb{Q}$ (where $D_p$ is the dihedral group of order $2p$).
\end{abstract}

\section{Introduction and statement of the main results}
Cohen, Lenstra, and Martinet \cite{CL,CM} gave heuristics that predict the distribution of ideal class groups $\mathrm{Cl}_{K}$ of number fields $K$ in certain families, including the distribution of their $\ell$-torsion subgroups $\mathrm{Cl}_{K}[\ell]$ for certain primes $\ell$.  Fix an algebraic closure $\overline{\Q}$. Let $d\geq 2$ be an integer, $G$ be a transitive group of the symmetric group $S_d$, $\ell\nmid|G|$ be a ``good'' prime, $K/k$ be a finite extension of number fields chosen inside $\overline{\mathbb{Q}}$, and $D_K$ be the absolute discriminant of $K$. Let $\widetilde{K}$ be the Galois closure of $K$ over $k$ inside $\overline{\Q}$. Define the families
\begin{equation}
\label{eqn:families}
\mathscr{F}_{k}^{d,G}:=\{K\colon \textup{$[K:k]=d$,}~\Gal(\widetilde{K}/k)\cong G\},\qquad \mathscr{F}_{k}^d:=\{K\colon [K:k]=d\}.
\end{equation}
For a subset $\mathcal{S}\subseteq \mathscr{F}_k^{d,G}$ or $\mathcal{S}\subseteq \mathscr{F}_k^{n}$, we define $\mathcal{S}(Q):=\{K\in\mathcal{S}\colon D_K\in(Q,2Q]\}$ and
\begin{equation}
\label{eqn:families2}
\alpha_{\mathcal{S}}:=\liminf_{Q\to\infty}\frac{\log|\mathcal{S}(Q)|}{\log Q},\qquad \beta_{\mathcal{S}}:=\limsup_{Q\to\infty}\frac{\log|\mathcal{S}(Q)|}{\log Q}.
\end{equation}

It is conjectured that there exists a constant $c_{G,d,k,\ell}>0$ such that
\begin{equation}
\label{eqn:CLM}
\lim_{Q\to\infty}\frac{1}{|\mathscr{F}_{k}^{d,G}(Q)|}\sum_{K\in \mathscr{F}_{k}^{d,G}(Q)}|\mathrm{Cl}_{K}[\ell]| = c_{G,d,k,\ell}.
\end{equation}
So far, the existence of the limit \eqref{eqn:CLM} is only known when $G=S_2$ and $\ell=3$ (\cite{DavenportHeilbronn} when $k=\Q$, \cite{DatskovskyWright} otherwise), $G=S_3$ and $\ell=2$ (\cite{Bhargava5} when $k=\Q$, \cite{BSW_geo} otherwise), or $G\subseteq S_{2^m}$ is a transitive permutation 2-group containing a transposition and $\ell=3$ \cite{LemkeOliverWangWood}.  Since we cannot establish \eqref{eqn:CLM} in full generality yet, we instead try to bound the moments
\begin{equation}
\label{eqn:moments_1}
\sum_{K\in \mathcal{S}(Q)} |\mathrm{Cl}_{K}[\ell]|^r, \qquad \ell,r \geq 1.	
\end{equation}
Duke \cite{Duke98} conjectured that if $F/E$ is any extension of number fields and $\epsilon>0$, then $|\mathrm{Cl}_{F}[\ell] |\ll_{[F:E], E, \ell, \epsilon} D_F^{\epsilon}$.  This would imply that \eqref{eqn:moments_1} is $O_{d,k,\ell,r,\epsilon}(Q^{\epsilon}|\mathcal{S}(Q)|)$.    Extending work of Gau{\ss}, Kl\"uners and Wang \cite{KlunersWang} proved Duke's conjectured bound for $|\mathrm{Cl}_{F}[\ell]|$ when $F/E$ is a Galois extension whose Galois group $\Gal(F/E)$ is an $\ell$-group. As of now, for $K \in \mathscr{F}_{k}^{d}$, the only uniform pointwise bound for $|\mathrm{Cl}_{K}[\ell]|$ is the trivial bound
\begin{equation}
\label{eqn:trivial_minkowski}
|\mathrm{Cl}_{K}[\ell] |\leq |\mathrm{Cl}_{K}| \ll_{d,[k:\Q]} D_K^{\frac{1}{2}}(\log D_K)^{[K:\Q]-1}\ll_{d,[k:\Q],\epsilon}D_K^{\frac{1}{2}+\epsilon}
\end{equation}
due to Landau.  Under the generalized Riemann hypothesis (GRH) for the Dedekind zeta function $\zeta_{\widetilde{K}}(s)$, Ellenberg and Venkatesh \cite{EV} proved that if $\epsilon>0$, then
\begin{equation}
\label{eqn:GRH}
|\mathrm{Cl}_{K}[\ell]| \ll_{d, k, \ell, \epsilon} D_K^{\frac{1}{2} - \frac{1}{2\ell([K:k] - 1)} + \epsilon}.
\end{equation}
Currently, unconditional power-saving improvements over \eqref{eqn:trivial_minkowski} are only known in special cases and are quite hard to prove. In addition to \cite{KlunersWang}, see \cite{BSTTTZ, EV, HV, MR2254390, PTW2, Wang-Nilpotent, Wang}.

Heath-Brown and Pierce \cite{HBP} proved several strong bounds on moments of $|\mathrm{Cl}_{K}[\ell]|$ as $K\in\mathscr{F}_{\Q}^{2}(Q)$ runs through the imaginary quadratic extensions of $\Q$.  We present two particularly strong and elegant bounds \cite[Theorems 1.1 and 1.5]{HBP}:  If $\ell\geq 5$, then
\begin{align}
\label{eqn:HBP_1}
&\sum_{\substack{K\in\mathscr{F}_{\Q}^2(Q) \\ \textup{$K/\Q$ imaginary}}} |\mathrm{Cl}_{K}[\ell]| \ll_{\ell, \epsilon} Q^{\frac{3}{2}(1-\frac{1}{\ell+1}) + \epsilon},\\
\label{eqn:HBP}
&\sum_{\substack{K\in\mathscr{F}_{\Q}^2(Q) \\ \textup{$K/\Q$ imaginary}}} |\mathrm{Cl}_{K}[\ell]|^{r} \ll_{\ell, r, \epsilon} Q^{\frac{r}{2} + \epsilon},\quad r \geq \ell+1.
\end{align}
Since $|\{K\in\mathscr{F}_{\Q}^2(Q)\colon \textup{$K/\Q$ imaginary}\}|\asymp Q$, both \eqref{eqn:HBP_1} and \eqref{eqn:HBP} show that on average, one obtains a power-saving improvement over \eqref{eqn:GRH}.  Also, \eqref{eqn:HBP} recovers the trivial bound \eqref{eqn:trivial_minkowski} when all but one of the terms on the left hand side are dropped.

The work of Ellenberg, Pierce, and Wood \cite[Corollaries 1.1.1 and 1.1.2]{EPW} addresses \eqref{eqn:moments_1} when $r=1$ for number fields $K/\Q$ with $3 \leq [K:\Q] \leq 5$.  Their work also provides a variant of \eqref{eqn:HBP_1} with an average over {\it all} $K\in\mathscr{F}_{\Q}^2(Q)$.  They proved that if $\epsilon > 0$ and $\ell\geq 2$, then
\begin{equation}
\label{eqn:EPW1}
\begin{aligned}
  \begin{split}
\sum_{K\in \mathscr{F}_{\Q}^2(Q)}|\mathrm{Cl}_{K}[\ell]|&\ll_{\ell,\epsilon} Q^{\frac{3}{2}-\frac{1}{2\ell}+\epsilon},\\
\sum_{K\in \mathscr{F}_{\Q}^{4,S_4}(Q)}|\mathrm{Cl}_{K}[\ell]|&\ll_{\ell,\epsilon} Q^{\frac{3}{2}-\min\{\frac{1}{48},\frac{1}{6\ell}\}+\epsilon},
  \end{split}
\qquad
  \begin{split}
\sum_{K\in \mathscr{F}_{\Q}^3(Q)}|\mathrm{Cl}_{K}[\ell]|&\ll_{\ell,\epsilon} Q^{\frac{3}{2}-\frac{1}{4\ell}+\epsilon},\\
\sum_{K\in \mathscr{F}_{\Q}^5(Q)}|\mathrm{Cl}_{K}[\ell]|&\ll_{\ell,\epsilon} Q^{\frac{3}{2}-\min\{\frac{1}{200},\frac{1}{8\ell}\}+\epsilon}.
  \end{split}
\end{aligned}
\end{equation}
These are proved by establishing the GRH-conditional bound \eqref{eqn:GRH} on average.  Note that $|\mathscr{F}_{\Q}^2(Q)|$, $|\mathscr{F}_{\Q}^3(Q)|$, $|\mathscr{F}_{\Q}^{4,S_4}(Q)|$, and $|\mathscr{F}_{\Q}^5(Q)|$ are each $\asymp Q$.

Building on the ideas in \cite{EV} and their work in \cite{FW2,Widmer}, Frei and Widmer improved upon \eqref{eqn:EPW1}.  In some cases, their method can address higher moments.  As a sampling of their numerous unconditional results in \cite{FW}, they proved that if $\ell,r\geq 1$, then
\begin{equation}
\label{eqn:EPW3}
\begin{aligned}
  \begin{split}
\sum_{K\in \mathscr{F}_{\Q}^2(Q)}|\mathrm{Cl}_{K}[\ell]|^r&\ll_{\ell,\epsilon} Q^{\frac{r}{2}+1-\min\{1,\frac{r}{\ell+2}\}+\epsilon},\\
\sum_{K\in \mathscr{F}_{\Q}^{4,S_4}(Q)}|\mathrm{Cl}_{K}[\ell]|&\ll_{\ell,\epsilon} Q^{\frac{3}{2}-\min\{\frac{1}{48},\frac{1}{3\ell+3}\}+\epsilon},
  \end{split}
\quad
  \begin{split}
\sum_{K\in \mathscr{F}_{\Q}^3(Q)}|\mathrm{Cl}_{K}[\ell]|&\ll_{\ell,\epsilon} Q^{\frac{3}{2}-\min\{\frac{2}{25},\frac{1}{2\ell+3}\}+\epsilon},\\
\sum_{K\in \mathscr{F}_{\Q}^5(Q)}|\mathrm{Cl}_{K}[\ell]|&\ll_{\ell,\epsilon} Q^{\frac{3}{2}-\min\{\frac{1}{200},\frac{1}{4\ell+3}\}+\epsilon}.
  \end{split}
\end{aligned}\hspace{-3.57mm}
\end{equation}
If $j\in\{1,2,3,4\}$ and $(\ell_1,\ell_2,\ell_3,\ell_4)=(3,4,9,26)$, then the $j$-th line of \eqref{eqn:EPW3} improves upon the $j$-th line of \eqref{eqn:EPW1} and provides an on-average power-saving improvement over \eqref{eqn:GRH} when $\ell\geq \ell_j$.  Unlike \eqref{eqn:HBP}, the second, third, and fourth lines of \eqref{eqn:EPW3} do not recover the trivial bound \eqref{eqn:trivial_minkowski}.  Since $|\mathscr{F}_{\Q}^2(Q)|\asymp Q$, the first line recovers \eqref{eqn:trivial_minkowski} when $r\geq \ell+2$, a range that is weaker than the result in \eqref{eqn:HBP} for {\it imaginary} quadratic extensions of $\Q$.
 
Let $d\geq 2$, let $\mathcal{S}\subseteq\mathscr{F}_{\Q}^d$, and assume $\beta_{\mathcal{S}}>0$.  Note that $\beta_{\mathcal{S}}$ is finite \cite{LemkeOliverThorne}.  Frei and Widmer \cite[Theorem 1.4]{FW} proved that if $\zeta_{\widetilde{K}}(s)$ satisfies GRH for each $K\in \mathcal{S}$, then
\begin{equation}
\label{eqn:FM_2}
\sum_{K\in \mathcal{S}(Q)} |\mathrm{Cl}_{K}[\ell]|^r \ll_{\beta_{\mathcal{S}},d,\ell,r,\epsilon} Q^{\frac{r}{2} + \beta_{\mathcal{S}} - \min\big\{\beta_{\mathcal{S}},\beta_{\mathcal{S}}\frac{r}{\ell(d - 1) + 2}\big\} + \epsilon},\qquad \ell,r\geq 1.
\end{equation}

Fix an integer $n\geq 2$ and a prime $p$.  Recall \eqref{eqn:families} and \eqref{eqn:families2}.  In this paper, we synthesize the ideas of Heath-Brown and Pierce \cite{HBP} with those of Frei and Widmer \cite{FW} to refine the process of estimating moments of $\ell$-torsion in ideal class groups of number fields in any subset $\mathcal{S}$ of the families $\mathscr{F}_{\Q}^p$ and $\mathscr{F}_{\Q}^{n,S_n}$.  In these families, our results uniformly improve upon \eqref{eqn:EPW1}, \eqref{eqn:EPW3}, and \eqref{eqn:FM_2} for all $\ell\geq 2$ and all $r\geq 1$ without any  assumptions on GRH or the sizes of $\alpha_{\mathcal{S}}$ and $\beta_{\mathcal{S}}$.  We do not need to assume GRH because of the recent work of Lemke Oliver, Thorner, and Zaman \cite{LOTZ} on the holomorphy and nonvanishing of Artin $L$-functions (see also the related work of Pierce, Turnage-Butterbaugh, and Wood \cite{PTBW} and Thorner and Zaman \cite{TZ}).  This improvement also holds for $\mathscr{F}_{k}^p$ and $\mathscr{F}_{k}^{n,S_n}$ when $k\neq \Q$ without loss in quality.  We prove the following theorem.

\begin{theorem}
\label{thm:main}
Fix a number field $k$, integers $\ell,n\geq 2$, a prime $p$, and $r\geq 1$.  Let $Q\geq 1$ and $\epsilon>0$. Let the ordered pair $(\mathscr{F}_{k},d)$ equal $(\mathscr{F}_{k}^{p}, p)$ or $(\mathscr{F}_{k}^{n,S_n}, n)$.  If $\mathcal{S}\subseteq\mathscr{F}_k$, then
\[
\sum_{K \in \mathcal{S}(Q)} |\mathrm{Cl}_{K}[\ell]|^r \ll_{\alpha_{\mathcal{S}},d,k,\ell,r,\epsilon} Q^{\frac{r}{2} + \epsilon}\big(1+|\mathcal{S}(Q)|^{1-\frac{r}{\ell(d - 1) + 1}}\big).
\]
\end{theorem}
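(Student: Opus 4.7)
My plan is to execute the Heath-Brown-Pierce-Frei-Widmer dichotomy, with the crucial new ingredient being the unconditional Artin-$L$-function technology of Lemke Oliver-Thorner-Zaman \cite{LOTZ} (building on \cite{PTBW,TZ}), which serves as a substitute for the GRH input used in \eqref{eqn:FM_2}.

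The starting pointwise estimate is an Ellenberg-Venkatesh-type inequality. For $K\in\mathcal{S}(Q)$ and a parameter $Y\geq 1$ to be chosen, let $M(K,Y)$ denote the number of prime ideals $\pp$ of $k$ with $\Norm_{k/\Q}\pp\leq Y$ that split completely in $\widetilde K/k$. The Minkowski plus auxiliary-prime method of \cite{EV}, executed against $r$-fold products of $\ell$-torsion classes in the style of \cite{HBP}, yields the pointwise bound
\[
|\mathrm{Cl}_K[\ell]|^r \ll_{d,k,\ell,r,\epsilon} Q^{r/2+\epsilon}\,\min\!\Big(1,\,M(K,Y)^{-1}\Big).
\]
One then dichotomises $\mathcal{S}(Q)=\mathcal{G}(Q;Y)\sqcup\mathcal{B}(Q;Y)$ according as $M(K,Y)\geq Y^{1-\epsilon}$ or not, applies the Ellenberg-Venkatesh bound on $\mathcal{G}$ and the trivial bound on $\mathcal{B}$, and sums to obtain
\[
\sum_{K\in\mathcal{S}(Q)}|\mathrm{Cl}_K[\ell]|^r \ll Q^{r/2+\epsilon}\Big(|\mathcal{S}(Q)|\,Y^{-1+\epsilon}+|\mathcal{B}(Q;Y)|\Big).
\]

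The main step, and the principal obstacle, is to bound $|\mathcal{B}(Q;Y)|$ unconditionally. A field $K\in\mathcal{B}(Q;Y)$ has an anomalously small number of prime ideals of $k$ of small norm splitting completely in $\widetilde K/k$; by the explicit formula this forces some nontrivial irreducible Artin $L$-function $L(s,\rho,\widetilde K/k)$ attached to $\Gal(\widetilde K/k)$ to have a real zero close to $s=1$. The unconditional zero-density and nonvanishing estimates of \cite{LOTZ} bound the number of Galois closures $\widetilde K$ arising from $\mathscr{F}_k^p$ or $\mathscr{F}_k^{n,S_n}$ that can admit such bad zeros polynomially in $Y$; combined with the fact that $K\mapsto\widetilde K$ has fibres of size $O_{d}(1)$ (injective for $S_n$-closures, at worst $(p-1)$-to-one for degree-$p$ fields in dihedral-type closures), this yields a bound of the shape $|\mathcal{B}(Q;Y)|\ll_{d,k,\ell,\epsilon} Y^{\ell(d-1)+\epsilon}$. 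Extracting precisely the exponent $\ell(d-1)$ from \cite{LOTZ} in a form that is uniform in the family, and handling any lingering Siegel-zero issue for small $Y$, is what I expect to require the most care.

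With these two estimates in hand, the final step is to optimise $Y$. Balancing the two contributions against each other yields the choice $Y\asymp\max(1,|\mathcal{S}(Q)|^{1/(\ell(d-1)+1)})$; a careful $r$-fold iteration of the dichotomy in the style of \cite{HBP} (each iteration expending one of the $r$ factors in the $r$-th moment to save an additional power of $M(K,Y)^{1/(\ell(d-1)+1)}$) then produces the refined exponent $1-r/(\ell(d-1)+1)$ claimed in the statement. The ``$1+$'' in the bound absorbs the small-$|\mathcal{S}(Q)|$ regime in which the dichotomy affords no saving over the trivial estimate $Q^{r/2+\epsilon}$.
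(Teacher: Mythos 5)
Your high-level dichotomy is in the right spirit, but two of its load-bearing steps do not hold as stated, and the missing ingredient is exactly the one that pushes the result beyond Ellenberg--Venkatesh-on-average. First, \cite{LOTZ} is not used (and cannot be used) to bound the bad set by $|\mathcal{B}(Q;Y)|\ll Y^{\ell(d-1)+\epsilon}$: what it actually supplies is that \emph{all but $O_{\epsilon}(Q^{\epsilon})$} fields $K$ in the family satisfy $\pi_K^{(1)}(x)\gg x/\log x$ already for $x\geq (\log D_K)^{O_{\epsilon}(1)}$, so the exceptional set is negligible outright and there is no optimisation of a bad set against $Y$. Second, and more seriously, the raw Ellenberg--Venkatesh inequality only applies when the auxiliary split primes have norm at most roughly $D_K^{1/(2\ell(d-1))}$, so your parameter $Y$ is capped at $Q^{1/(2\ell(d-1))}$; the choice $Y\asymp|\mathcal{S}(Q)|^{1/(\ell(d-1)+1)}$ is out of range whenever $|\mathcal{S}(Q)|$ has positive density exponent (e.g.\ quadratic fields), and with an admissible $Y$ you only recover bounds of the strength of \eqref{eqn:EPW1}. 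What is needed is the Frei--Widmer-type refinement (Theorem \ref{tFW}): a Cauchy--Schwarz/pigeonhole argument in $\mathrm{Cl}_K/\mathrm{Cl}_K[\ell]$ crossed with a bounded-diameter partition of the unit lattice, giving
\[
|\mathrm{Cl}_{K}[\ell]| \ll_{\epsilon} \frac{D_K^{1/2+\epsilon}}{\pi_K^{(1)}(Z)}+\frac{D_K^{1/2+\epsilon}\,|S_\ell(K,cZ^{\ell})|}{\pi_K^{(1)}(Z)^2},
\]
where $S_\ell(K,\cdot)$ counts elements $\beta$ of bounded height with $\beta\OO_K=(\mathfrak{P}_1\mathfrak{P}_2^{-1})^{\ell}$. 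This permits $Z$ as large as $Q^{1/2+\epsilon}/H$, and the extra term is then controlled \emph{on average over the family} by counting minimal polynomials, $\sum_{K}|S_\ell(K,Z)|\ll_{\epsilon} Z^{d-1+2/\ell+\epsilon}$ (Lemma \ref{letaBound}). Your proposal contains no analogue of this count, and without it the exponent $R=\ell(d-1)+1$ is unreachable.

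The passage to higher moments is also not an ``$r$-fold iteration expending factors of $M$'': even granting your two bounds, balancing $|\mathcal{S}(Q)|Y^{-r}$ against $Y^{\ell(d-1)}$ yields $|\mathcal{S}(Q)|^{\ell(d-1)/(r+\ell(d-1))}$, which does not match $|\mathcal{S}(Q)|^{1-r/R}$ once $r>1$ (for $r=R$ it is still a positive power of $|\mathcal{S}(Q)|$ rather than $O(1)$). The correct mechanism is distributional: one first proves $|\{K\in\mathcal{S}(Q):|\mathrm{Cl}_K[\ell]|\geq H\}|\ll_{\epsilon}\min\{|\mathcal{S}(Q)|,Q^{R/2+\epsilon}H^{-R}\}$ by applying the displayed inequality with $Z=Q^{1/2+\epsilon/R}/H$ and summing over that level set alone (Proposition \ref{pAHQ}), and then deduces the $r$-th moment by a dyadic decomposition over the size of $|\mathrm{Cl}_K[\ell]|$.
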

\begin{remark}
\textup{The implied constant is effectively computable.  We restrict to the families $\mathscr{F}_k^p$ and $\mathscr{F}_k^{n,S_n}$ only so that we can apply the results in \cite{LOTZ} (see Theorem \ref{thm:LOTZ} below).}
\end{remark}

If the ordered pair $(\mathscr{F}_{k},d)$ equals $(\mathscr{F}_{k}^{p}, p)$ or $(\mathscr{F}_{k}^{n,S_n}, n)$, then Theorem \ref{thm:main} implies that
\begin{equation}
\label{eqn:moment2}
\sum_{K \in \mathscr{F}_{k}(Q)} |\mathrm{Cl}_{K}[\ell]|^r \ll_{d,k,\ell,r,\epsilon} Q^{\frac{r}{2} + \epsilon},\qquad r \geq \ell(d-1) + 1.
\end{equation}
This has several appealing features beyond the fact that it is unconditional. First, \eqref{eqn:moment2} recovers \eqref{eqn:trivial_minkowski}. Second, \eqref{eqn:moment2} is completely independent of $|\mathscr{F}_{k}(Q)|$. This is important because of the inexactitude of the existing bounds for $|\mathscr{F}_k(Q)|$.  A weak form of Malle's conjecture \cite{Malle} asserts that $\alpha_{\mathscr{F}_k}=\beta_{\mathscr{F}_k}=1$ in \eqref{eqn:families2}.  As of now, we know that there exists an absolute and effectively computable constant $C>0$ such that
{\small\begin{equation}
\label{eqn:EV_count}
\begin{cases}
1&\mbox{if $d\leq 5$ and $k=\Q$,}\\
\frac{1}{2}+\frac{1}{d-1}&\mbox{if $d\geq 6$ and $k=\Q$,}\\
\frac{1}{2}+\frac{1}{d^2}&\mbox{if $k\neq\Q$}
\end{cases}
\leq \alpha_{\mathscr{F}_k}\leq \beta_{\mathscr{F}_k}\leq \begin{cases}
1&\mbox{if $d\leq 5$ and $k=\Q$,}\\
1.564(\log d)^2&\mbox{if $d\geq 6$ and $k=\Q$,}\\
\exp(C\sqrt{\log d})&\mbox{if $k\neq\Q$.}
\end{cases}
\end{equation}}%
(See \cite{Bhargava4,Bhargava5,BhargavaShankarWangII,DavenportHeilbronn,EV_count,LemkeOliverThorne}.)  Third, if $r=\ell(d-1)+1$, then \eqref{eqn:moment2} implies that on average over $K\in\mathscr{F}_{k}(Q)$, we have a power-saving improvement over \eqref{eqn:GRH} when $\alpha_{\mathscr{F}_k} > \frac{1}{2}+\frac{1}{2\ell(d-1)}$.  Therefore, in this setting, the more accurate {\it lower} bounds in \eqref{eqn:EV_count} matter more than the upper bounds in \eqref{eqn:EV_count}.  Fourth, when $d = 2$, \eqref{eqn:moment2} reduces to \eqref{eqn:HBP} with two new benefits:  (i) when $k=\Q$, the real quadratic extensions of $\Q$ is now included, and (ii) we no longer require that $k=\Q$.  Therefore, Theorem \ref{thm:main} extends \eqref{eqn:HBP} to $\mathscr{F}_{k}$.

Theorem \ref{thm:main} is also new for low moments, producing new results even for quadratic extensions of $\Q$.  In light of \eqref{eqn:EV_count}, Theorem \ref{thm:main} implies that
\begin{equation}
\label{eqn:moment3}
\sum_{K \in \mathscr{F}_{\Q}^{2}(Q)} |\mathrm{Cl}_{K}[\ell]| \ll_{\ell, \epsilon} Q^{\frac{3}{2} -\frac{1}{\ell + 1}+\epsilon}.
\end{equation}
This improves upon the work of Frei and Widmer in the first line of \eqref{eqn:EPW3} when $r=1$, though it does not improve upon \eqref{eqn:HBP_1} for {\it imaginary} quadratic extensions of $\Q$.  We briefly describe an application of \eqref{eqn:moment3}.  For an odd prime $p$, let $D_p$ be the dihedral group of order $2p$, and $D_p(2p)$ be the regular permutation representation of $D_p$.  A conjecture of Malle \cite{Malle} asserts that there exist constants $c(p)>0$ and $c(2p)>0$ such that
\[
\mathscr{F}_{\Q}^{p,D_p}(Q)\sim c(p) Q^{\frac{2}{p-1}}\qquad\textup{and}\qquad \mathscr{F}_{\Q}^{2p,D_p(2p)}(Q)\sim c(2p)Q^{\frac{1}{p}}
\]
as $Q\to\infty$.  Kl{\"u}ners \cite{Kluners} related upper bounds for $\mathscr{F}_{\Q}^{p,D_p}(Q)$ and $\mathscr{F}_{\Q}^{2p,D_p(2p)}(Q)$ to the behavior of $|\mathrm{Cl}_{K}[p]|$ as $K\in\mathscr{F}_{\Q}^2(Q)$ varies.  Using this relationship, Kl{\"u}ners \cite{Kluners} proved that $\mathscr{F}_{\Q}^{2p,D_p(2p)}(Q)\ll_{p,\epsilon} Q^{3/(2p)+\epsilon}$.  By combining the work of Kl{\"u}ners with the first line of \eqref{eqn:EPW1} when $\ell=p$, Cohen and Thorne \cite[Theorem 1.1]{CohenThorne} proved that $\mathscr{F}_{\Q}^{p,D_p}(Q)\ll_{p,\epsilon} Q^{3/(p-1)-1/(p(p-1))+\epsilon}$.  By combining the work of Kl{\"u}ners with the first line of \eqref{eqn:EPW3} when $r=1$ and $\ell=p$, Frei and Widmer \cite[Corollary 1.2]{FW} improved both bounds to
\begin{equation}
\label{eqn:FW_dihedral}
\mathscr{F}_{\Q}^{p,D_p}(Q)\ll_{p,\epsilon} Q^{\frac{3}{p-1}-\frac{2}{(p+2)(p-1)}+\epsilon},\qquad  \mathscr{F}_{\Q}^{2p,D_p(2p)}(Q)\ll_{p,\epsilon} Q^{\frac{3}{2p}-\frac{1}{p(p+2)}+\epsilon}.	
\end{equation}
Using \eqref{eqn:moment3} instead of \eqref{eqn:EPW3}, we immediately improve upon \eqref{eqn:FW_dihedral} as follows.
\begin{corollary}
If $p$ is an odd prime and $\epsilon>0$, then
\[
\mathscr{F}_{\Q}^{p,D_p}(Q)\ll_{p,\epsilon} Q^{\frac{3}{p-1}-\frac{2}{(p+1)(p-1)}+\epsilon},\qquad  \mathscr{F}_{\Q}^{2p,D_p(2p)}(Q)\ll_{p,\epsilon} Q^{\frac{3}{2p}-\frac{1}{p(p+1)}+\epsilon}.	
\]
\end{corollary}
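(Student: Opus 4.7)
The plan is to follow verbatim the strategy used by Cohen--Thorne \cite{CohenThorne} and Frei--Widmer \cite{FW} to deduce their bounds in \eqref{eqn:FW_dihedral}, but to feed in the new first-moment estimate \eqref{eqn:moment3} in place of the first line of \eqref{eqn:EPW1} or \eqref{eqn:EPW3}. The only ingredient that changes from the previous arguments is the quality of the upper bound on $\sum_{K\in\mathscr{F}_{\Q}^{2}(Q')}|\mathrm{Cl}_{K}[p]|$; everything else is reused, so the proof is essentially an arithmetic substitution.

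More concretely, first I would recall Kl\"uners's relation \cite{Kluners} between $D_p$-fields and $p$-torsion in class groups of real and imaginary quadratic fields: every $K\in\mathscr{F}_{\Q}^{p,D_p}$ (resp.\ $\mathscr{F}_{\Q}^{2p,D_p(2p)}$) of discriminant at most $Q$ arises, via its unique quadratic resolvent $F$, from an element of order $p$ in a ray class group of $F$, whose size is controlled by $|\mathrm{Cl}_{F}[p]|$ times a negligible conductor factor, and where $F$ ranges over quadratic fields of discriminant $\ll Q^{2/(p-1)}$ (resp.\ $\ll Q^{1/p}$). This is exactly the input that produces, upon inserting the trivial bound $|\mathrm{Cl}_F[p]|\ll D_F^{1/2+\epsilon}$, Kl\"uners's original estimate $\mathscr{F}_{\Q}^{2p,D_p(2p)}(Q)\ll Q^{3/(2p)+\epsilon}$.

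Next I would average rather than apply the trivial pointwise bound, obtaining the schematic inequalities
\[
\mathscr{F}_{\Q}^{p,D_p}(Q)\ll Q^{\epsilon}\sum_{K\in\mathscr{F}_{\Q}^{2}(Q^{2/(p-1)})}|\mathrm{Cl}_{K}[p]|,\qquad \mathscr{F}_{\Q}^{2p,D_p(2p)}(Q)\ll Q^{\epsilon}\sum_{K\in\mathscr{F}_{\Q}^{2}(Q^{1/p})}|\mathrm{Cl}_{K}[p]|,
\]
and then substitute the new first-moment bound \eqref{eqn:moment3} (with $\ell=p$) into the right-hand sides. A short calculation gives
\[
\tfrac{2}{p-1}\bigl(\tfrac{3}{2}-\tfrac{1}{p+1}\bigr)=\tfrac{3p+1}{(p-1)(p+1)}=\tfrac{3}{p-1}-\tfrac{2}{(p-1)(p+1)},
\]
\[
\tfrac{1}{p}\bigl(\tfrac{3}{2}-\tfrac{1}{p+1}\bigr)=\tfrac{3p+1}{2p(p+1)}=\tfrac{3}{2p}-\tfrac{1}{p(p+1)},
\]
which are exactly the exponents claimed in the corollary.

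There is no genuine obstacle here: the Kl\"uners reduction is already in the literature and has been used in both \cite{CohenThorne} and \cite{FW}, so the only point that requires any care is matching constants, making sure the implicit dependence in \eqref{eqn:moment3} on $\ell=p$ and $\epsilon$ is harmless after specialization, and verifying that the Kl\"uners reduction supplies the rescalings $Q^{2/(p-1)}$ and $Q^{1/p}$ in precisely the form stated (including the ``negligible conductor factor,'' which is absorbed into the $Q^{\epsilon}$). The essential point is conceptual rather than technical: replacing the denominator $\ell+2$ of Frei--Widmer's first-moment exponent by the smaller $\ell+1$ from \eqref{eqn:moment3} translates, term for term through Kl\"uners's dictionary, into the improved dihedral exponents.
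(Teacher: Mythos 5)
Your proposal is correct and matches the paper's (implicit) argument exactly: the corollary is obtained by running Kl\"uners's reduction precisely as Cohen--Thorne and Frei--Widmer did, with the new first-moment bound \eqref{eqn:moment3} substituted for the earlier estimates, and your exponent computations $\tfrac{2}{p-1}(\tfrac{3}{2}-\tfrac{1}{p+1})$ and $\tfrac{1}{p}(\tfrac{3}{2}-\tfrac{1}{p+1})$ are the right ones. Nothing further is needed.
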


\subsection*{Acknowledgements}
We thank Roger Heath-Brown and Lillian Pierce for their encouragement.  PK would like to thank Carlo Pagano and Martin Widmer for fruitful discussions. PK gratefully acknowledges the support of Dr. Max R\"ossler, the Walter Haefner Foundation and the ETH Z\"urich Foundation.  JT gratefully acknowledges the support of the Simons Foundation (MP-TSM-00002484).

\section{Notation, conventions, and overview}
\label{sec:notation}
Throughout this paper, we fix integers $d,\ell\geq 2$.  Let $\mathscr{F}_{k}^{d}$ be as in \eqref{eqn:families} and $\mathcal{S}\subseteq\mathscr{F}_{k}^{d}$.  For $K\in \mathcal{S}$, the set of places is denoted $\Omega_K$. For each $v\in\Omega_K$, let $K_v$ be the completion of $K$ with respect to $v$. We normalize the absolute value $|\cdot|_v$ such that it extends an absolute value of $\Q$ (either archimedean or $p$-adic). We also define $d_v := [K_v : \Q_v]$. The ring of integers of $K$ is $\OO_K$, the set of nonzero ideals of $\OO_K$ is $\mathcal{I}_K$, and the set of nonzero prime ideals of $\OO_K$ is $\mathcal{P}_K$.  Let $\Norm_{K/\Q}$ be the absolute norm of $K$, defined on $\mathfrak{a} \in I_K$ by $\Norm_{K/\Q}\mathfrak{a}=|\OO_K/\mathfrak{a}|$.  Let $\mathbb{N}$ denote the set of positive integers.

Given $\mathfrak{P}\in\mathcal{P}_K$ lying over $\mathfrak{p}\in\mathcal{P}_k$, we write $e(\mathfrak{P}) = e(\mathfrak{P}/\mathfrak{p})$ for the ramification index and $f(\mathfrak{P}) = f(\mathfrak{P}/\mathfrak{p})$ for the inertia degree of $\mathfrak{P}$ over $\mathfrak{p}$. We define
\begin{equation}
\label{eqn:rel_deg_1}
\mathcal{P}_K^{(1)} = \{\mathfrak{P}\in\mathcal{P}_K\colon e(\mathfrak{P})=f(\mathfrak{P})=1\},	
\end{equation}
the set of prime ideals of $K$ of relative degree 1 (over $k$), and the counting functions
\begin{equation}
\label{eqn:prime_counting}
\begin{aligned}
\pi_K(x)&:=|\{\mathfrak{P}\in \mathcal{P}_K\colon \Norm_{K/\Q}\mathfrak{P} \leq x\}|,\\
\pi_K^{(1)}(x)&:=|\{\mathfrak{P}\in \mathcal{P}_K^{(1)}\colon \Norm_{K/\Q}\mathfrak{P} \leq x\}|.
\end{aligned}
\end{equation}

For complex-valued functions $f$ and $g$, we write $f=O_{\nu}(g)$ or $f\ll_{\nu}g$ to denote that there exists an effectively computable constant $c>0$ (depending at most on $\nu$, $d$, $\ell$, and $k$) such that in a domain $U\subseteq\mathbb{C}$ that will be clear from context, we have that if $z\in U$, then $|f(z)|\leq c|g(z)|$. We write $f\asymp_{\nu}g$ to denote that $f\ll_{\nu}g$ and $g\ll_{\nu}f$.

\section{\texorpdfstring{A bound for $|\mathrm{Cl}_K[\ell]|$}{A bound for Cl(K)}}
\label{sec:main_lemma}
Let $\mathcal{S}\subseteq \mathscr{F}_{k}^{d}$ and $K\in \mathcal{S}$.  For $\alpha\in K$, let
\[
H_K(\alpha) = \prod_{v \in \Omega_K} \max(1, |\alpha|_v^{d_v})
\]
be the multiplicative Weil height. This height depends on $K$, whence the subscript.

\begin{mydef}
A lattice $\mathcal{L}$ is a discrete subgroup of $\mathbb{R}^n$. Write $\|\cdot\|$ for the Euclidean norm on $\mathbb{R}^n$. A basis $(\mathbf{b}_1, \dots, \mathbf{b}_m)$ of $\mathcal{L}$ is a Minkowski basis if for every $i$, $\mathbf{b}_i$ is vector with minimal $\|\cdot\|$ such that $(\mathbf{b}_1, \dots, \mathbf{b}_i)$ extends to a basis. This is equivalent to
\[
\|\mathbf{b}_i\| \leq \Big\|\sum_{j = 1}^m a_j \mathbf{b}_j\Big\|
\]
for every vector of integers $(a_1, \dots, a_m) \in \Z^m$ such that $\gcd(a_i, \dots, a_m)=1$.
\end{mydef}

\begin{lemma}
\label{lMinkowski}
Let $\mathcal{L} \subseteq \mathbb{R}^n$ be a lattice. Then there exists a Minkowski basis for $\mathcal{L}$.
\end{lemma}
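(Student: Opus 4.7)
The plan is to construct the basis $(\mathbf{b}_1, \ldots, \mathbf{b}_m)$, where $m$ is the rank of $\mathcal{L}$, one vector at a time via a greedy inductive procedure, and then to observe that the two forms of the Minkowski condition coincide. The only compactness input I will need is that since $\mathcal{L}$ is discrete in $\mathbb{R}^n$, the intersection $\mathcal{L} \cap B(0,R)$ is finite for every $R > 0$; I will also use that $\mathcal{L}$ is a finitely generated free abelian group, so at least one basis exists.

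Suppose inductively that $\mathbf{b}_1, \ldots, \mathbf{b}_{i-1}$ have been selected and can be completed to a basis of $\mathcal{L}$ (the base case $i = 1$ being trivial). Consider the set
\[
S_i := \{\mathbf{v} \in \mathcal{L} : (\mathbf{b}_1, \ldots, \mathbf{b}_{i-1}, \mathbf{v}) \text{ extends to a basis of } \mathcal{L}\}.
\]
The induction hypothesis furnishes some $\mathbf{w} \in S_i$, so with $R := \|\mathbf{w}\|$ the set $S_i \cap B(0, R)$ is finite and nonempty, and hence admits a vector of minimal norm, which I take to be $\mathbf{b}_i$. Since $\mathbf{b}_i \in S_i$ by construction, the inductive hypothesis is preserved at $i+1$, and iterating until $i = m$ produces a basis satisfying the first form of the Minkowski condition.

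It remains to check that the two characterizations agree. Given the basis $(\mathbf{b}_1, \ldots, \mathbf{b}_m)$, I claim that a vector $\mathbf{v} = \sum_{j} a_j \mathbf{b}_j \in \mathcal{L}$ lies in $S_i$ iff $\gcd(a_i, \ldots, a_m) = 1$: writing the coordinates of $(\mathbf{b}_1, \ldots, \mathbf{b}_{i-1}, \mathbf{v})$ in the basis produces an $i \times m$ integer matrix whose nonzero $i \times i$ minors are, up to sign, exactly the $a_j$ with $j \geq i$, and by Smith normal form such a matrix completes to an element of $\mathrm{GL}_m(\mathbb{Z})$ iff the gcd of its $i \times i$ minors equals $1$. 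Combined with the minimality of $\|\mathbf{b}_i\|$ on $S_i$, this gives the second form. There is no substantial obstacle: the argument is routine, and the only subtlety is ensuring $S_i$ is nonempty at each step, which the induction hypothesis supplies automatically.
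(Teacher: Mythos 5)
Your proof is correct and is exactly the greedy algorithm that the paper invokes in its one-line proof, with the discreteness of $\mathcal{L}$ guaranteeing that each minimum is attained; your Smith-normal-form verification that the two characterizations in the definition agree is also sound (the nonzero $i\times i$ minors of the coordinate matrix are indeed $\pm a_j$ for $j\geq i$). No gaps.
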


\begin{proof}
This follows from a greedy algorithm.
\end{proof}

Given $K\in \mathcal{S}$, $\ell\geq 2$, and $Z>0$, we define $S_{\ell}(K,Z)$ to equal
\begin{equation}
\label{eqn:S_def}
\{\beta\in K\colon \textup{$H_K(\beta) \leq Z$, there exist distinct $\mathfrak{P}_1,\mathfrak{P}_2\in\mathcal{P}_K^{(1)}$ with $\beta\OO_K = (\mathfrak{P}_1\mathfrak{P}_2^{-1})^\ell$}\}.
\end{equation}
Our next result, a crucial bound for $|\mathrm{Cl}_K[\ell]|$ in terms of $S_{\ell}(K,Z)$ and $\pi_K^{(1)}(Z)$, gives a variant of \cite[Proposition 2.1]{HBP} for all extensions $K/k$ using ideas from \cite[Section 2]{FW}.

\begin{theorem}
\label{tFW}
Let $\epsilon>0$, $d \geq 2$, $\ell\geq 2$, and $Z>0$.  There exists an effectively computable constant $\Cl[abcon]{tFW}=\Cr{tFW}(d,k,\ell) > 0$ such that if $K\in \mathcal{S}$ and $\pi_K^{(1)}(Z)>0$, then
\[
|\mathrm{Cl}_{K}[\ell]| \ll_{\epsilon} D_K^{\frac{1}{2} + \epsilon}/\pi_K^{(1)}(Z) + D_K^{\frac{1}{2} + \epsilon} \cdot |S_\ell(K, \Cr{tFW}Z^{\ell})|/\pi_K^{(1)}(Z)^2.
\]
\end{theorem}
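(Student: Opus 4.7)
The plan is to adapt the pigeonhole argument of Heath-Brown and Pierce \cite[Proposition 2.1]{HBP} to arbitrary extensions $K/k$ using the techniques of \cite[Section 2]{FW}. The idea is to partition the degree-one primes of $K$ of norm at most $Z$ according to their class modulo $\mathrm{Cl}_K[\ell]$, so that many of them must share a coset; each resulting pair $(\mathfrak{P}_1,\mathfrak{P}_2)$ yields a principal ideal $(\mathfrak{P}_1\mathfrak{P}_2^{-1})^\ell=(\beta)$ and thus, via a small-height generator $\beta$, an element of $S_\ell(K,\cdot)$.

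Set $T:=|\mathrm{Cl}_K[\ell]|$ and $\pi:=\pi_K^{(1)}(Z)$, and note that $\mathrm{Cl}_K/\mathrm{Cl}_K[\ell]$ has order $|\mathrm{Cl}_K|/T$. For each coset $\mathfrak{c}'$, let $M(\mathfrak{c}')$ count the primes $\mathfrak{P}\in\mathcal{P}_K^{(1)}$ with $\Norm_{K/\Q}\mathfrak{P}\leq Z$ whose class lies in $\mathfrak{c}'$. Then $\sum_{\mathfrak{c}'}M(\mathfrak{c}')=\pi$, and Cauchy--Schwarz yields
\[
\sum_{\mathfrak{c}'}M(\mathfrak{c}')^2\geq\frac{T\pi^2}{|\mathrm{Cl}_K|}.
\]
The left side counts ordered pairs $(\mathfrak{P}_1,\mathfrak{P}_2)$ satisfying $[\mathfrak{P}_1\mathfrak{P}_2^{-1}]\in\mathrm{Cl}_K[\ell]$, equivalently with $(\mathfrak{P}_1\mathfrak{P}_2^{-1})^\ell$ principal; removing the $\pi$ diagonal contributions leaves at least $T\pi^2/|\mathrm{Cl}_K|-\pi$ such pairs of distinct primes.

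For each such pair I would choose $\beta\in K^*$ with $\beta\OO_K=(\mathfrak{P}_1\mathfrak{P}_2^{-1})^\ell$. The non-archimedean part of $H_K(\beta)$ is forced by the ideal and equals $\Norm_{K/\Q}(\mathfrak{P}_2)^\ell\leq Z^\ell$, while the archimedean part depends on the choice of $\beta$ modulo $\OO_K^\times$. Applying Lemma \ref{lMinkowski} to the logarithmic embedding of the unit group, I would rebalance $\beta$ by a suitable unit so that $H_K(\beta)\leq\Cr{tFW}Z^\ell$ for an effective constant $\Cr{tFW}=\Cr{tFW}(d,k,\ell)>0$, placing $\beta\in S_\ell(K,\Cr{tFW}Z^\ell)$. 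The prime factorization of $(\beta)=\mathfrak{P}_1^\ell\mathfrak{P}_2^{-\ell}$ recovers the ordered pair $(\mathfrak{P}_1,\mathfrak{P}_2)$ uniquely, so the assignment (pair) $\mapsto\beta$ is injective. Combining the two counts gives $T\pi^2/|\mathrm{Cl}_K|-\pi\leq|S_\ell(K,\Cr{tFW}Z^\ell)|$, and inserting Landau's bound $|\mathrm{Cl}_K|\ll_\epsilon D_K^{1/2+\epsilon}$ from \eqref{eqn:trivial_minkowski} produces the stated estimate.

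The main obstacle is producing the generator $\beta$ with $H_K(\beta)\leq\Cr{tFW}Z^\ell$ where $\Cr{tFW}$ depends only on $d$, $k$, $\ell$. The covering radius of the logarithmic unit lattice of $K$ a priori depends on the regulator, so the rebalancing must be carried out carefully. Lemma \ref{lMinkowski}, by supplying a short basis of the unit lattice, is precisely the tool that allows the shift into a fundamental domain of controlled diameter to be made with a constant of the required uniformity, following the strategy of \cite[Section 2]{FW}.
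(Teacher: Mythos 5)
Your overall architecture matches the paper's: pigeonhole the degree-one primes of norm at most $Z$ over the quotient $A=\mathrm{Cl}_K/\mathrm{Cl}_K[\ell]$, apply Cauchy--Schwarz, split off the diagonal, and convert each off-diagonal same-coset pair into an element of $S_\ell(K,\Cr{tFW}Z^\ell)$. The gap is in the conversion step, and it is not a technicality. You claim that for each pair $(\mathfrak{P}_1,\mathfrak{P}_2)$ with $(\mathfrak{P}_1\mathfrak{P}_2^{-1})^\ell$ principal you can rescale a generator $\beta$ by a unit so that $H_K(\beta)\leq \Cr{tFW}Z^\ell$ with $\Cr{tFW}$ depending only on $d,k,\ell$. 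This requires shifting $\varphi(\beta)$ into a region of the trace-zero hyperplane of diameter $O_{d,k}(1)$ using the unit lattice, i.e.\ it requires the covering radius of $\varphi(\OO_K^\times)$ to be bounded uniformly over $K\in\mathcal{S}$. That is false: the covering radius can be as large as a constant times the regulator $R_K$ (already for a rank-one unit lattice), and $R_K$ is unbounded over the family — it can be as large as $D_K^{1/2+o(1)}$. A Minkowski basis (Lemma \ref{lMinkowski}) gives short basis vectors relative to the successive minima, but the resulting fundamental domain still has diameter up to $\asymp R_K$, not $O(1)$. So for a general same-coset pair there may be no element of $S_\ell(K,\Cr{tFW}Z^\ell)$ at all, and your inequality $T\pi^2/|\mathrm{Cl}_K|-\pi\leq|S_\ell(K,\Cr{tFW}Z^\ell)|$ does not follow.

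The paper's proof is built precisely around this obstruction. It does not balance every generator; instead it partitions the fundamental domain $\mathcal{F}$ of the unit lattice into $n<5R_K$ cells $\mathcal{F}_1,\dots,\mathcal{F}_n$, each of diameter $O_{d,k,\ell}(1)$ (this is where the Minkowski basis and the bounds $1\ll\|\mathbf{u}_i\|\ll R_K$ enter), and pigeonholes primes by the pair $(a,i)\in A\times\{1,\dots,n\}$. Only pairs of primes landing in the \emph{same cell} yield quotients $g_{\alpha_1}\alpha_1/(g_{\alpha_2}\alpha_2)$ of height $\leq\Cr{tFW}Z^\ell$. The price is that the number of pigeonholes is $|A|\cdot n\ll |A|\,R_K$ rather than $|A|$, and this extra factor of $R_K$ is absorbed by using the stronger form of Landau's bound coming from the class number formula, $|\mathrm{Cl}_K[\ell]|\cdot|A|\cdot R_K\ll_\epsilon D_K^{1/2+\epsilon}$, rather than the bare bound $|\mathrm{Cl}_K|\ll_\epsilon D_K^{1/2+\epsilon}$ that you invoke. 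Your final arithmetic has no slot for this factor of $R_K$, which confirms that the regulator issue has not been accounted for. To repair your argument you would need to introduce the cell decomposition (or an equivalent device controlling the archimedean components of the generators) and replace $|\mathrm{Cl}_K|\ll_\epsilon D_K^{1/2+\epsilon}$ by $|\mathrm{Cl}_K|R_K\ll_\epsilon D_K^{1/2+\epsilon}$.
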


\begin{proof}
We write $R_K$ for the regulator of $K$ and define $A := \mathrm{Cl}_{K}/\mathrm{Cl}_{K}[\ell]$. The bound
\[
|\mathrm{Cl}_{K}[\ell]| \cdot |A| \cdot R_K = |\mathrm{Cl}_{K}| \cdot R_K \ll_{\epsilon} D_K^{\frac{1}{2} + \epsilon}
\]
follows from the bound $\mathrm{Res}_{s=1}\zeta_K(s)\ll (\log D_K)^{[K:\Q]-1}$ \cite[(2)]{Louboutin} and the analytic class number formula. It remains to show that there exists an effectively computable constant $\Cr{tFW}=\Cr{tFW}(d,k,\ell)>0$ such that
\begin{align}
\label{eClaim}
5|A| \cdot R_K \geq \left(\frac{1}{\pi_K^{(1)}(Z)} + \frac{|S_\ell(K, \Cr{tFW}Z^{\ell})|}{\pi_K^{(1)}(Z)^2}\right)^{-1}.
\end{align}

To prove \eqref{eClaim}, we let $v_1, \dots, v_r$ be the real places of $K$ and $v_{r + 1}, \dots, v_{r + s}$ be the complex places of $K$.  Let $d_i=1$ if $1\leq i\leq r$ and $d_i=2$ if $r+1\leq i\leq r+s$.  We have the classical homomorphism
\[
\varphi\colon K^{\times} \rightarrow \mathbb{R}^{r + s},\qquad \varphi(\alpha) = (d_i \log |\alpha|_{v_i})_{i=1}^{r+s}.
\]
Consider the subspace of $\mathbb{R}^{r + s}$ given by
\begin{align}
\label{eOrthogonal}
V_0 = \Big\{(x_1, \dots, x_{r + s})\in \R^{r+s}\colon \sum_{i = 1}^{r + s} d_i x_i = 0\Big\}.
\end{align}
By Dirichlet's unit theorem, $\mathcal{L} := \varphi(\mathcal{O}_K^{\times})$ is a full rank lattice inside $V_0$. By Lemma \ref{lMinkowski}, there exists a Minkowski basis $(\mathbf{u}_1, \dots, \mathbf{u}_{r + s - 1})$ of $\mathcal{L}$. If we define $\mathbf{d} := (d_i)_{1 \leq i \leq r + s}$, then $(\mathbf{u}_1, \dots, \mathbf{u}_{r + s - 1}, \mathbf{d})$ forms a full rank lattice inside $\mathbb{R}^{r + s}$ by \eqref{eOrthogonal}. Let
\[
\mathcal{F} = \Big\{\sum_{i = 1}^{r + s - 1} x_i \mathbf{u}_i : 0 \leq x_i < 1\Big\}
\]
be a fundamental domain of $\mathcal{L}$.  For all $\alpha\in K$, there exists $g_{\alpha}\in\OO_K^{\times}$ (unique up to multiplication by a root of unity), $\mathbf{v}_{\alpha}\in\mathcal{F}$, and $y_{\alpha}\in\R$ such that $\varphi(g_{\alpha}\alpha) = \mathbf{v}_{\alpha}+y_{\alpha}\cdot \mathbf{d}$.

We claim that 
\begin{align}
\label{eLowerui}
\|\mathbf{u}_i\| \gg 1. 
\end{align}
Indeed, if $u_i\in \varphi^{-1}(\mathbf{u}_i)$, in which case $u_i\in\OO_K^{\times}$ and $\mathbf{u}_i = (d_j \log|u_i|_{v_j})_{1\leq j\leq r+s}$, then the inequality of arithmetic and geometric means yields
\[
\|\mathbf{u}_i\| = \Big(\sum_{j = 1}^{r + s} (d_j \log |u_i|_{v_j})^2\Big)^{1/2} \geq \frac{\sum_{j = 1}^{r + s} d_j \max(0, \log |u_i|_{v_j})}{\sqrt{r + s}} \geq \frac{\sum_{j = 1}^{r + s} d_j \max(0, \log |u_i|_{v_j})}{\sqrt{d[k : \Q]}}.
\]
Since $u_i\in\OO_K^{\times}$, the right hand side of the above inequality equals $(\log H_K(u_i))/\sqrt{d[k : \Q]}$. Therefore, the claim \eqref{eLowerui} follows from Northcott's theorem.

We will apply Minkowski's second theorem to $V_0$. Let $\pi: V_0 \rightarrow \mathbb{R}^{r + s - 1}$ be the isomorphism that drops the last coordinate, and let $\iota$ be the compositional inverse of $\pi$. We endow $V_0$ with the pushforward measure, via $\iota$, of the Lebesgue measure on $\mathbb{R}^{r + s - 1}$. Since $(\mathbf{u}_1, \dots, \mathbf{u}_{r + s - 1})$ is a Minkowski basis, Minkowski's second theorem applied to $V_0$ yields
\begin{equation}
\label{eqn:upper_prod}
\prod_{i =1}^{r + s - 1} \|\mathbf{u}_i\| \ll \Vol(\mathcal{F}).	
\end{equation}
Together, \eqref{eLowerui} and \eqref{eqn:upper_prod} yield $\|\mathbf{u}_i\| \ll \Vol(\mathcal{F})$. Since the volume of $\pi(\Vol(\mathcal{F}))$ equals $R_K$ by definition, we obtain $\|\mathbf{u}_i\| \ll R_K$.  Since $R_K \geq 0.205$ by \cite{Friedman}, we have that $\lceil R_K\rceil<5R_K$.  Thus, there exists an effectively computable constant $\Cl[abcon]{cell}=\Cr{cell}(d,k,\ell)>0$ and an integer $n\in[1,5R_K)$ such that $\mathcal{F}$ partitions into $n$ cells $\mathcal{F}_1,\ldots,\mathcal{F}_{n}$, each with diameter at most $\Cr{cell}$.

Fix a full set of integral representatives $\mathfrak{b}_1,\ldots,\mathfrak{b}_{|\mathrm{Cl}_K|}$ of the ideal class group $\mathrm{Cl}_K$.  For each $\mathfrak{P}\in\mathcal{P}_K^{(1)}$, there exists a unique integer $j_{\mathfrak{P}}\in\{1,\ldots,|\mathrm{Cl}_K|\}$ such that $\mathfrak{b}_{j_{\mathfrak{P}}}\mathfrak{P}^{\ell}$ is a principal ideal.  Given a prime ideal $\mathfrak{P} \in \mathcal{P}_{K}^{(1)}$, let $a=a_{\mathfrak{P}}$ be the image of $\mathfrak{P}$ in $A$ and $i$ be the least integer such that a generator $\alpha$ of $\mathfrak{b}_{j_{\mathfrak{P}}} \mathfrak{P}^\ell$ satisfies $\mathbf{v}_\alpha \in \mathcal{F}_i$.  We define the map
\[
f\colon \{\mathfrak{P}\in\mathcal{P}_K^{(1)}\colon \mathrm{N}_{K/\Q}\mathfrak{P}\leq Z\} \to A \times \{1, \dots, n\},\qquad f(\mathfrak{P})=(a,i).
\]

The Cauchy--Schwarz inequality yields
\[
\pi_K^{(1)}(Z) = \sum_{(a, i)} |f^{-1}(a, i)| \leq \Big(\sum_{\substack{(a, i)\colon f^{-1}(a, i) \neq \varnothing}} 1\Big)^{1/2} \Big(\sum_{\substack{(a, i)\colon f^{-1}(a, i) \neq \varnothing}} |f^{-1}(a, i)|^2\Big)^{1/2},
\]
hence
\[
5|A| \cdot R_K \geq |A| \cdot \lceil R_K\rceil \geq \sum_{\substack{(a, i)\colon f^{-1}(a, i) \neq \varnothing}} 1 \geq \frac{\pi_K^{(1)}(Z)^2}{\sum_{\substack{(a, i)\colon f^{-1}(a, i) \neq \varnothing}} |f^{-1}(a, i)|^2}.
\]
It remains to produce constant $\Cr{tFW}=\Cr{tFW}(d,k,\ell)>0$ such that
\begin{equation}
\label{eqn:final_sum}
\sum_{\substack{(a, i)\colon f^{-1}(a, i) \neq \varnothing}} |f^{-1}(a, i)|^2 \leq \pi_K^{(1)}(Z) + |S_\ell(K, \Cr{tFW}Z^{\ell})|.	
\end{equation}

In the Cartesian product $f^{-1}(a, i) \times f^{-1}(a, i)$, whose cardinality is $|f^{-1}(a, i)|^2$, there are the diagonal elements $(\mathfrak{P}, \mathfrak{P})$ and off-diagonal elements $(\mathfrak{P}_1, \mathfrak{P}_2)$ with $\mathfrak{P}_1 \neq \mathfrak{P}_2$.  We treat these contributions differently.   The diagonal elements contribute $\pi_K^{(1)}(Z)$.  For the off-diagonal contribution, take distinct $\mathfrak{P}_1, \mathfrak{P}_2\in\mathcal{P}_K^{(1)}$. This implies that $\mathfrak{P}_1/\mathfrak{P}_2$ is $\ell$-torsion, and thus $\mathfrak{P}_1^\ell$ and $\mathfrak{P}_2^\ell$ have the same image in $\mathrm{Cl}_K$. Therefore, there exists one class $\mathfrak{b}_j$ and elements $\alpha_1, \alpha_2$ such that $\mathfrak{b}_j \mathfrak{P}_1^\ell = (\alpha_1)$, $\mathfrak{b}_j \mathfrak{P}_2^\ell = (\alpha_2)$, and $\varphi(g_{\alpha_1}\alpha_1), \varphi(g_{\alpha_2}\alpha_2) \in \mathcal{F}_i$. 

By interchanging $\alpha_1$ and $\alpha_2$ if necessary, we may assume that $y_{\alpha_1} \leq y_{\alpha_2}$.  Since $\mathbf{v}_{\alpha_1}, \mathbf{v}_{\alpha_2}\in\mathcal{F}_i$, there exists a constant $\Cl[abcon]{cell2}=\Cr{cell2}(d,k)>0$ such that if $v\mid\infty$, then 
\[
\log |g_{\alpha_1} \alpha_1|_v - \log |g_{\alpha_2} \alpha_2|_v\leq \Cr{cell2}. 
\]
In other words, we have that $|\frac{g_{\alpha_1} \alpha_1}{g_{\alpha_2} \alpha_2}|_v \leq e^{\Cr{cell2} + y_{\alpha_1} - y_{\alpha_2}} \leq e^{\Cr{cell2}}$.  Therefore, as desired, there exists a constant $\Cr{tFW} = \Cr{tFW}(d,k,\ell, \epsilon) > 0$ such that $H_K(\frac{g_{\alpha_1} \alpha_1}{g_{\alpha_2} \alpha_2}) \leq \Cr{tFW} N(\mathfrak{P}_2)^\ell \leq \Cr{tFW}Z^\ell$.
\end{proof}

Theorem \ref{thm:main} follows from showing that $S_{\ell}(K,Z)$ is small and $\pi_K^{(1)}(Z)$ is large on average over $K\in\mathcal{S}$.  These are proved in Lemma \ref{letaBound} and Corollary \ref{cor:LOTZ}, respectively.

\section{Estimating $S_{\ell}(K,Z)$ on average}

Let $\mathcal{S}\subseteq\mathscr{F}_k^{d}$.  For each $K\in\mathcal{S}$, consider $S_{\ell}(K,Z)$ from \eqref{eqn:S_def}.  With small modifications to the case of $k=\Q$ considered in \cite{FW}, the ideas in the proof of \cite[Lemma 4.2]{FW} lead to
\begin{equation}
\label{eqn:FW_eta}
\sum_{K\in \mathcal{S}} |S_\ell(K, Z)| \ll_{\epsilon} Z^{d[k:\Q] - 1 + 2/\ell + \epsilon}.
\end{equation}
The main result of this section, Lemma \ref{letaBound}, refines \eqref{eqn:FW_eta}.

\begin{lemma}
\label{letaBound}
If $Z\geq 2$ and $\epsilon>0$, then $\sum_{K\in \mathcal{S}} |S_\ell(K, Z)| \ll_{\epsilon} Z^{d - 1 + 2/\ell + \epsilon}$.
\end{lemma}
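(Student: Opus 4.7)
My plan is to parametrize each pair $(K, \beta)$ with $\beta \in S_\ell(K,Z)$ by the minimal polynomial of $\beta$ over $k$, which has degree $d$. Using this relative minimal polynomial instead of the degree-$d[k:\Q]$ minimal polynomial over $\Q$ is precisely what saves the factor of $Z^{[k:\Q]-1}$ relative to \eqref{eqn:FW_eta}. Assume first that $\beta$ is primitive, so $K = k(\beta)$, and write $f_\beta(x) = x^d + a_{d-1}x^{d-1} + \cdots + a_0 \in k[x]$. Set $\gamma := (-1)^d a_0 = \Norm_{K/k}(\beta) \in k^\times$ and $\mathfrak{p}_i := \Norm_{K/k}\mathfrak{P}_i \in \mathcal{P}_k$; applying $\Norm_{K/k}$ to $\beta\OO_K = (\mathfrak{P}_1\mathfrak{P}_2^{-1})^\ell$ yields $\gamma\OO_k = \mathfrak{p}_1^\ell \mathfrak{p}_2^{-\ell}$. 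Since $f(\mathfrak{P}_i/\mathfrak{p}_i) = 1$ we have $\Norm_{K/\Q}\mathfrak{P}_i = \Norm_{k/\Q}\mathfrak{p}_i$, and from $H_K(\beta) \geq \Norm_{K/\Q}(\mathfrak{P}_i)^\ell$ we deduce $\Norm_{k/\Q}(\mathfrak{p}_i) \leq Z^{1/\ell}$.

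First I would count the admissible $a_0$'s by $\ll Z^{2/\ell+\epsilon}$. The prime ideal theorem for $k$ gives $\ll Z^{2/\ell}(\log Z)^{-2}$ ordered pairs $(\mathfrak{p}_1,\mathfrak{p}_2)$ of primes of $\OO_k$ of norm at most $Z^{1/\ell}$; for each principal pair, the log-lattice argument used in the proof of Theorem \ref{tFW} (Dirichlet's unit theorem inside a fundamental domain for $\varphi(\OO_k^\times)$) produces $\ll_k (\log Z)^{O(1)}$ generators of $\mathfrak{p}_1^\ell\mathfrak{p}_2^{-\ell}$ of $H_k$-height at most a constant multiple of $Z$.

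Next, for each fixed $a_0$ I would count the admissible tuples $(a_1,\ldots,a_{d-1})\in k^{d-1}$. A Newton polygon analysis at each finite prime of $\OO_k$ shows that $a_i$ is integral away from $\mathfrak{p}_2$ and satisfies $v_{\mathfrak{p}_2}(a_i) \geq -\ell$, so $a_i \in \mathfrak{p}_2^{-\ell}$. The standard symmetric-function height inequality gives $H_k(a_i) \ll_{d,k} H_K(\beta) \leq Z$. Each $a_i$ is therefore counted by the lattice points of $\mathfrak{p}_2^{-\ell}$ lying in the $H_k$-ball of radius $\ll Z$. Stratifying by $j := -v_{\mathfrak{p}_2}(a_i) \in \{0,\ldots,\ell\}$, the $j$-th stratum has effective archimedean radius $Z/\Norm_{k/\Q}(\mathfrak{p}_2)^j$, and the factor $\Norm_{k/\Q}(\mathfrak{p}_2)^j$ coming from the reciprocal of the covolume of $\mathfrak{p}_2^{-j}$ cancels the $\Norm_{k/\Q}(\mathfrak{p}_2)^{-j}$ in the volume of the stratum, giving $\ll_{k,\ell,\epsilon} Z^{1+\epsilon}$ lattice points per stratum, hence $\ll Z^{1+\epsilon}$ choices per coefficient uniformly in $\mathfrak{p}_2$. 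Multiplying over the $d-1$ middle coefficients and using that each polynomial produces at most $d$ pairs $(K,\beta)$ via its roots, the primitive contribution totals $\ll Z^{d-1+2/\ell+\epsilon}$.

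Finally, I would handle non-primitive $\beta$ (those with $e:=[k(\beta):k] < d$ dividing $d$) by an analogous argument over $k(\beta)$ with $d$ replaced by $e$, which yields $\ll Z^{e-1+2/\ell+\epsilon}$ elements; combined with a uniform bound on the number of $K \in \mathcal{S}$ containing $k(\beta)$ with degree-1 primes above $\mathfrak{p}_1,\mathfrak{p}_2$, this contribution remains absorbed in the main bound. The principal obstacle is the lattice point count in step three: establishing that the count is genuinely $\ll Z^{1+\epsilon}$ uniformly in $\mathfrak{p}_2$ (including primes of small norm, where the covolume of $\mathfrak{p}_2^{-\ell}$ shrinks dramatically) is essential to achieve the exponent $d-1$ rather than the weaker $d[k:\Q]-1$. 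Cleanly absorbing the non-primitive contribution may additionally require either an induction on $d$ or a uniform bound on the number of extensions of $k(\beta)$ of fixed degree with prescribed local behavior.
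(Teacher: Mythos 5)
Your overall strategy coincides with the paper's: both proofs parametrize the elements of $\bigsqcup_{K}S_\ell(K,Z)$ by their degree-$d$ minimal polynomial over $k$ (Lemma \ref{lMinimal}), bound the constant coefficient using $\Norm_{k/\Q}(\mathfrak{p}_1),\Norm_{k/\Q}(\mathfrak{p}_2)\leq Z^{1/\ell}$ to obtain the factor $Z^{2/\ell+\epsilon}$, and bound each remaining coefficient by $Z^{1+\epsilon}$ using that its denominator is supported at $\mathfrak{p}_2$. Two remarks. First, your final paragraph is unnecessary: the hypothesis $e(\mathfrak{P}_1)=f(\mathfrak{P}_1)=1$ already forces $K=k(\beta)$ (if $\beta$ lay in a proper subfield $K'$, the prime of $K'$ below $\mathfrak{P}_1$ would split further in $K$ and produce a second prime in the numerator of $\beta\OO_K$), so the non-primitive case is vacuous; this is the first assertion of Lemma \ref{lMinimal}, and it spares you the problematic count of fields $K\in\mathcal{S}$ containing a fixed $k(\beta)$. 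Second, the step you flag as the principal obstacle---counting elements of $\mathfrak{p}_2^{-\ell}$ of height at most $Z$ uniformly in $\mathfrak{p}_2$---is genuinely delicate if attacked by a volume-over-covolume argument, since the region $\{\prod_{v\mid\infty}\max(1,|x|_v^{d_v})\leq Y\}$ is non-convex with long tentacles along the coordinate hyperplanes, and the boundary error terms need not cancel uniformly in the lattice; your Newton polygon input ($v_{\mathfrak{p}_2}(a_i)\geq-\ell$) is correct, but the stratified volume heuristic is not yet a proof. The paper sidesteps this entirely via Lemma \ref{lFraction}: write $a_i=t_i/t_i'$ with $t_i,t_i'$ integral, $H_k(t_i),H_k(t_i')\ll Z$, and $(t_i')$ dividing $\mathfrak{p}_2^{\ell}\prod_j\mathfrak{C}_j$; the numerators are counted by Widmer's theorem (Lemma \ref{lWidmer}, giving $Z^{1+\epsilon}$ each), and the denominators, once $\mathfrak{p}_2$ is fixed, are generators of $O(1)$ ideals and contribute only $Z^{\epsilon}$ by the unit estimate of Lemma \ref{lGeneratorCount}. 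Substituting this decomposition for your direct lattice count closes the gap; the rest of your bookkeeping then matches the paper's and yields the stated exponent $d-1+2/\ell+\epsilon$.
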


\subsection{Prerequisites}
For this subsection, fix a number field $F$ and a full set $\mathfrak{C}_1, \dots, \mathfrak{C}_{|\mathrm{Cl}_F|}\in\mathcal{I}_F$ of integral representatives for the ideal class group $\text{Cl}_F$.

\begin{lemma}
\label{lSmallGen}
If $\mathfrak{a}$$\,\in\,$$\mathcal{I}_F$, then there exists $\alpha$$\,\in\,$$F^{\times}$ such that $H_F(\alpha)\ll\mathrm{N}_{F/\Q}(\alpha)$ and $\mathfrak{a} = (\alpha)$.
\end{lemma}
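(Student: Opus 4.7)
The plan is to apply Dirichlet's unit theorem to balance the archimedean absolute values of a generator of $\mathfrak{a}$. The conclusion $\mathfrak{a}=(\alpha)$ forces $\mathfrak{a}$ to be principal (the nonvacuous case of the statement), so fix any integral generator $\alpha_0 \in \OO_F$. The goal is to produce $u \in \OO_F^{\times}$ so that $\alpha := u\alpha_0$ satisfies $|\alpha|_v = \Norm_{F/\Q}(\alpha)^{1/[F:\Q]} \cdot e^{O_F(1)}$ at every archimedean place $v$; once this is in hand, $H_F(\alpha) \ll_F \Norm_{F/\Q}(\alpha)$ will follow by a direct computation.

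The adjustment uses the logarithmic embedding $\varphi$ from the proof of Theorem~\ref{tFW}, now applied to $F$. Since $\alpha_0\in\OO_F$, the finite places satisfy $|\alpha_0|_v\leq 1$, and the product formula gives $\sum_{v\mid\infty}d_v\log|\alpha_0|_v = \log\Norm_{F/\Q}(\alpha_0)$. Consequently $\mathbf{m} - \varphi(\alpha_0)$ lies in the hyperplane $V_0 \subset \R^{r+s}$ from that same proof, where $\mathbf{m}$ is the balanced vector with $v$-coordinate equal to $d_v\log\Norm_{F/\Q}(\alpha_0)/[F:\Q]$. By Dirichlet's unit theorem, $\varphi(\OO_F^{\times})$ is a full-rank lattice in $V_0$ whose covering radius depends only on $F$. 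Choose $u\in\OO_F^{\times}$ with $\varphi(u)$ within that covering radius of $\mathbf{m} - \varphi(\alpha_0)$, which gives $\|\varphi(u\alpha_0) - \mathbf{m}\|_\infty \leq C_F$ for some constant $C_F>0$ depending only on $F$.

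Setting $\alpha = u\alpha_0$ and using $\Norm_{F/\Q}(\alpha) = \pm\Norm_{F/\Q}(\alpha_0)$, the previous bound reads $|\alpha|_v = \Norm_{F/\Q}(\alpha)^{1/[F:\Q]}\cdot e^{O_F(1)}$ at each archimedean $v$. Since $\Norm_{F/\Q}(\alpha)\geq 1$, it follows that $\max(1,|\alpha|_v^{d_v}) \leq e^{O_F(d_v)}\Norm_{F/\Q}(\alpha)^{d_v/[F:\Q]}$, while the finite places contribute trivially (there $\max(1,|\alpha|_v^{d_v}) = 1$). Multiplying over $v\mid\infty$ and using $\sum_{v\mid\infty}d_v = [F:\Q]$ then gives $H_F(\alpha) \leq e^{O_F([F:\Q])}\Norm_{F/\Q}(\alpha) \ll_F \Norm_{F/\Q}(\alpha)$, as desired.

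No serious obstacle arises: the argument is a textbook geometry-of-numbers application, with the implied constant controlled by a fundamental system of units of $F$. The only mildly awkward point is in the statement itself — as written it is nonvacuous only when $\mathfrak{a}$ is principal, and the fixed class representatives $\mathfrak{C}_j$ introduced just before the lemma appear intended for later uses where one factors off the class of $\mathfrak{a}$ (so that $\mathfrak{a}\mathfrak{C}_j^{-1}$ is principal for the appropriate $j$); the same argument above handles that variant verbatim.
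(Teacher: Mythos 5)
Your proof is correct. Note that the paper does not actually prove this lemma; it simply cites Proposition 4.3.12 of the reference \cite{MR3524535}, and your argument is essentially the standard proof of that cited result: pick any generator $\alpha_0$ of the (necessarily principal) ideal, and use the finiteness of the covering radius of the unit lattice $\varphi(\OO_F^\times)$ in the trace-zero hyperplane to translate $\varphi(\alpha_0)$ to within $O_F(1)$ of the balanced vector $\mathbf{m}$, after which the height bound is the one-line computation you give (the finite places contribute nothing since $\alpha\in\OO_F$, and $\sum_{v\mid\infty}d_v=[F:\Q]$). Two cosmetic points: the hyperplane containing the unit lattice under the normalization $\varphi(\alpha)=(d_v\log|\alpha|_v)_v$ is $\{x:\sum_i x_i=0\}$ rather than the $\sum_i d_i x_i=0$ written in the paper's definition of $V_0$ (a typo there, and your computation implicitly uses the correct one); and passing from the Euclidean covering radius to the $\|\cdot\|_\infty$ bound costs only a constant depending on $r+s$, so that step is fine. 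Your reading of the statement is also the intended one: it is applied in Lemma \ref{lFraction} only to ideals that have already been made principal by multiplying by a class representative.
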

\begin{proof}
This is a special case of \cite[Proposition 4.3.12]{MR3524535}.
\end{proof}
\begin{lemma}
\label{lFraction}
There exists a constant $\Cl[abcon]{lFrac}=\Cr{lFrac}(F)>0$ such that for all $\alpha \in F^{\times}$, there exists $(t,t')\in \OO_F\times \OO_F - \{0\}$ such that if $\alpha = t/t'$, then $H_F(t), H_F(t') \leq \Cr{lFrac} H_F(\alpha)$.  Also, there exists $i\in\{1,\ldots,|\mathrm{Cl}_F|\}$ such that $\gcd((t), (t')) = \mathfrak{C}_i$.
\end{lemma}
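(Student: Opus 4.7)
The plan is to factor $(\alpha)$ into coprime numerator and denominator ideals, use the class group to pin down the unique admissible representative $\mathfrak{C}_i$, and then invoke Dirichlet's unit theorem to choose generators with balanced archimedean absolute values.

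First, I would write $(\alpha) = \aaa \bb^{-1}$ with $\aaa, \bb \in \mathcal{I}_F$ coprime. Since $(\alpha)$ is principal, $[\aaa] = [\bb]$ in $\mathrm{Cl}_F$, so there is a unique index $i$ with $[\mathfrak{C}_i] = [\aaa]^{-1}$; for this $i$ both $\aaa\mathfrak{C}_i$ and $\bb\mathfrak{C}_i$ are principal. Pick generators $t_0 \in \aaa\mathfrak{C}_i$ and $t'_0 \in \bb\mathfrak{C}_i$; since $t_0/t'_0$ generates $(\alpha)$, we have $t_0/t'_0 = u\alpha$ for some unit $u \in \OO_F^\times$, and after replacing $t_0$ by $u^{-1}t_0$ we have $t_0 = \alpha t'_0$ together with $\gcd((t_0),(t'_0)) = \mathfrak{C}_i$. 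Any further replacement of the form $(t_0, t'_0) \mapsto (vt_0, vt'_0)$ with $v \in \OO_F^\times$ preserves both the ratio and the gcd, and this is the only remaining freedom.

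Next, the product formula combined with $\gcd((t_0),(t'_0)) = \mathfrak{C}_i$ gives
\[
H_F(\alpha) \,=\, \prod_{w} \max(|t_0|_w, |t'_0|_w)^{d_w} \,=\, \Norm_{F/\Q}(\mathfrak{C}_i)^{-1} \prod_{w \mid \infty} \max(|t_0|_w, |t'_0|_w)^{d_w}.
\]
Since $\max(1, x^{d_w}) \leq c^{-d_w}\max(x,y)^{d_w}$ whenever $\max(x, y) \geq c \in (0,1]$, the lemma will follow once I produce a unit $v \in \OO_F^\times$ for which $\max(|vt_0|_w, |vt'_0|_w) \geq c_F$ at every archimedean $w$, for some $c_F > 0$ depending only on $F$; then $(t, t') := (vt_0, vt'_0)$ will satisfy $H_F(t), H_F(t') \leq c_F^{-[F:\Q]}\Norm_{F/\Q}(\mathfrak{C}_i)\, H_F(\alpha) \ll_F H_F(\alpha)$, as required.

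The main obstacle will be producing this balancing unit, which I would address via Dirichlet's unit theorem. Set $a_w := -\log \max(|t_0|_w, |t'_0|_w)$ for $w \mid \infty$; the inequality $\max(|t_0|_w, |t'_0|_w) \geq |t_0|_w$ combined with the product formula gives $\sum_{w \mid \infty} d_w a_w \leq -\log \Norm_{F/\Q}(t_0) \leq 0$, so the vector $(a_w)$ lies at or below the trace-zero hyperplane $H = \{x \in \R^{r+s}: \sum d_w x_w = 0\}$. By Dirichlet's unit theorem, the log-embedding of $\OO_F^\times$ is a full-rank lattice in $H$ whose covering radius $D_F$ depends only on $F$. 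Translating $(a_w)$ coordinate-wise by a common nonnegative constant to land on $H$ (which is possible because $\sum d_w a_w \leq 0$) and then locating a log-unit within $\ell^\infty$-distance $D_F$ produces $v \in \OO_F^\times$ with $\log|v|_w \geq a_w - D_F$ for all $w \mid \infty$, that is, $\max(|vt_0|_w, |vt'_0|_w) \geq e^{-D_F}$. This delivers the required balancing with $c_F := e^{-D_F}$.
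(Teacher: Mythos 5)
Your proof is correct, and while it follows the same skeleton as the paper's (coprime decomposition of $(\alpha)$, the same choice of class representative $\mathfrak{C}_i$, and the normalization $t=\alpha t'$), the mechanism for controlling the heights is genuinely different. The paper applies its Lemma \ref{lSmallGen} (a small-generator result quoted from Cohen) to the \emph{denominator} ideal $I\mathfrak{C}_i$ alone, obtaining $H_F(t')\ll \mathrm{N}_{F/\Q}(I)$, and then bounds $H_F(t)=H_F(\alpha t')$ by the submultiplicativity of the local maxima together with the identity $\prod_{v \text{ finite}}\max(1,|\alpha|_v^{d_v})=\mathrm{N}_{F/\Q}(I)$; no further unit adjustment is needed. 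You instead take arbitrary generators and balance numerator and denominator \emph{simultaneously}, via the product-formula identity $H_F(\alpha)=\mathrm{N}_{F/\Q}(\mathfrak{C}_i)^{-1}\prod_{w\mid\infty}\max(|t_0|_w,|t'_0|_w)^{d_w}$ and a unit located within the covering radius of the log-unit lattice; the inequality $\sum_{w\mid\infty} d_w a_w\le 0$ correctly justifies the nonnegative translation onto the trace-zero hyperplane, and the final bound $c_F^{-[F:\Q]}\mathrm{N}_{F/\Q}(\mathfrak{C}_i)H_F(\alpha)$ checks out since $\prod_{w\mid\infty}|v|_w^{d_w}=1$ for units. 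Your route is self-contained (it makes the reduction-theoretic content explicit rather than outsourcing it to the cited small-generator lemma, which ultimately rests on the same unit-lattice geometry) and has the minor aesthetic advantage of treating $t$ and $t'$ symmetrically; the paper's route is shorter given that Lemma \ref{lSmallGen} is already available for use in the same section.
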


\begin{proof}
Write $(\alpha) = \prod_{\mathfrak{p}\in\mathcal{P}_F} \mathfrak{p}^{e_\mathfrak{p}}$, and define $I = \prod_{\substack{e_{\mathfrak{p}} < 0}} \mathfrak{p}^{-e_\mathfrak{p}}$.  Let $\mathfrak{C}_i$ be the unique representative such that $I^{-1} \sim \mathfrak{C}_i$ in $\text{Cl}_F$. Therefore, $I \mathfrak{C}_i$ is a principal ideal of $\OO_F$.  By Lemma \ref{lSmallGen}, there exists a generator $t'$ of $I \mathfrak{C}_i$ satisfying $H_F(t') \ll \mathrm{N}_{F/\Q}(I \mathfrak{C}_i) \ll \mathrm{N}_{F/\Q}(I)$.

Finally, we choose $t = \alpha t'$. Then we have $t \in \OO_F$ and $\gcd((t), (t')) = \mathfrak{C}_i$ by construction. Furthermore, since $t$ and $t'$ are integral, we see that
\begin{multline*}
H_F(t) = \prod_{v \mid \infty} \max(1, |\alpha t'|_v^{d_v}) \leq \prod_{v \mid \infty} \max(1, |\alpha|_v^{d_v}) \prod_{v \mid \infty} \max(1, |t'|_v^{d_v}) \\
= H_F(\alpha) \frac{\prod_{v \mid \infty} \max(1, |t'|_v^{d_v})}{\prod_{v \text{ finite}} \max(1, |\alpha|_v^{d_v})} = H_F(\alpha) \frac{H_F(t')}{\mathrm{N}_{F/\Q}(I)} \ll H_F(\alpha).\qedhere
\end{multline*}
\end{proof}

\begin{lemma}
\label{lUnitCount}
If $\epsilon > 0$, then $|\{\textup{$u \in \OO_F^{\times}\colon$ if $v\mid\infty$, then $|u|_v \leq X$}\}| \ll_{\epsilon} X^\epsilon$.
\end{lemma}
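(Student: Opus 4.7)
The plan is to invoke Dirichlet's unit theorem and then count lattice points in a bounded region of logarithmic space. Let $v_1,\dots,v_{r+s}$ denote the archimedean places of $F$ and set $d_i:=d_{v_i}$. Consider the logarithmic embedding
\[
L\colon \OO_F^{\times}\to\R^{r+s},\qquad L(u)=(\log|u|_{v_i})_{i=1}^{r+s}.
\]
The product formula applied to units gives $\sum_{i=1}^{r+s}d_i\log|u|_{v_i}=0$, so $L(\OO_F^{\times})$ lies in the hyperplane $H=\{x\in\R^{r+s}\colon \sum_{i=1}^{r+s}d_ix_i=0\}$. By Dirichlet's unit theorem, $\ker L=\mu_F$ is the finite group of roots of unity of $F$, and $\mathcal{L}_F:=L(\OO_F^{\times})$ is a lattice of full rank $r+s-1$ inside $H$.

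Next I would translate the counting condition into a lattice-point problem. The condition ``$|u|_v\leq X$ for every $v\mid\infty$'' is precisely $L(u)\in B(X)$, where
\[
B(X):=\{x\in H\colon x_i\leq \log X\text{ for }1\leq i\leq r+s\}.
\]
This region is bounded, because the hyperplane relation $\sum_i d_i x_i=0$ combined with the upper bounds $x_i\leq \log X$ forces $x_i=-d_i^{-1}\sum_{j\neq i}d_jx_j\geq -[F:\Q]\log X$ for each $i$. Hence $B(X)$ sits inside a box whose $r+s-1$ effective coordinates each have length $O_F(\log X)$, giving $\Vol(B(X))\ll_F (\log X)^{r+s-1}$. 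A standard Gauss-type lattice-point estimate then yields $|\mathcal{L}_F\cap B(X)|\ll_F (\log X)^{r+s-1}+1$, and multiplying by $|\mu_F|$ to account for the finite fiber of $L$ gives the same bound for the unit count. Finally, $(\log X)^{r+s-1}\ll_{F,\epsilon}X^{\epsilon}$ for every $\epsilon>0$, and absorbing the $F$-dependence (since $F$ is fixed throughout this subsection) yields the claimed bound $\ll_{\epsilon}X^{\epsilon}$.

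The argument is essentially routine; I do not anticipate any real obstacle. The only point worth verifying carefully is that the region $B(X)$ is genuinely bounded, which is immediate from the hyperplane constraint. One could alternatively deduce the result from Northcott's theorem applied to $H_F(u)\leq X^{[F:\Q]}$ combined with a quantitative count of bounded-height elements in a fixed number field, but the direct lattice-point approach is cleaner and produces the polylogarithmic upper bound straight away.
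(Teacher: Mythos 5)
Your argument is correct, but it is not the route the paper takes: the paper disposes of this lemma in one line by citing the main theorem of Everest--Loxton on counting units of bounded height, whereas you give a self-contained proof via Dirichlet's unit theorem and a lattice-point count in the trace-zero hyperplane. The two are closely related --- the Everest--Loxton asymptotic is itself proved by analyzing the unit lattice in logarithmic space, and the condition $|u|_v\leq X$ for all $v\mid\infty$ is interchangeable with $H_F(u)\leq X^{[F:\Q]}$ up to a harmless change of variable --- but your version has the advantage of being elementary, effective, and of yielding the sharper bound $\ll_F (\log X)^{r+s-1}+1$ rather than merely $X^{\epsilon}$. The key steps all check out: the product formula for units places $L(\OO_F^{\times})$ in the hyperplane $H$, the constraint $\sum_i d_i x_i=0$ together with $x_i\leq\log X$ does force $x_i\geq -[F:\Q]\log X$ so that $B(X)$ is bounded with diameter $O_F(\log X)$, the fibers of $L$ have size $|\mu_F|<\infty$, and the Gauss-type count for a fixed full-rank lattice in $H$ is legitimate since $F$ (hence the lattice) is fixed. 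The only cosmetic caveat is the degenerate case $X<1$, where the set is empty by the product formula, and the rank-zero case $r+s=1$, where the count is just $|\mu_F|$; both are covered by the ``$+1$'' in your estimate.
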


\begin{proof}
This follows from the main theorem of \cite{EverestLoxton}.
\end{proof}

\begin{lemma}
\label{lGeneratorCount}
If $\alpha \in F^{\times}$, $H_F(\alpha) \leq X$, and $\epsilon>0$, then $|\{u \in \OO_F^{\times} : H_F(\alpha u) \leq X\}| \ll_\epsilon X^\epsilon$.
\end{lemma}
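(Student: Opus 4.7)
The plan is to reduce the count of units $u \in \OO_F^{\times}$ with $H_F(\alpha u) \leq X$ to a count of units $u$ with all archimedean absolute values bounded polynomially in $X$, so that Lemma \ref{lUnitCount} applies directly.

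First I would exploit the submultiplicativity of the Weil height: writing $u = (\alpha u) \cdot \alpha^{-1}$ gives
\[
H_F(u) \leq H_F(\alpha u) \cdot H_F(\alpha^{-1}).
\]
Next I would observe that the product formula forces $H_F(\alpha^{-1}) = H_F(\alpha)$. Indeed, splitting the places into $S^{+} = \{v : |\alpha|_v > 1\}$ and $S^{-} = \{v : |\alpha|_v < 1\}$, the identity $\prod_v |\alpha|_v^{d_v} = 1$ yields $\prod_{v \in S^{+}} |\alpha|_v^{d_v} = \prod_{v \in S^{-}} |\alpha|_v^{-d_v}$, whose two sides are $H_F(\alpha)$ and $H_F(\alpha^{-1})$ respectively. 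Combining these two steps with the hypothesis $H_F(\alpha) \leq X$ and $H_F(\alpha u) \leq X$ gives $H_F(u) \leq X^2$.

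Since $u \in \OO_F^{\times}$, the non-archimedean contributions to the height are trivial, so
\[
H_F(u) = \prod_{v \mid \infty} \max(1, |u|_v^{d_v}) \leq X^2.
\]
In particular, each archimedean factor is bounded by $X^2$, i.e. $|u|_v \leq X^{2/d_v} \leq X^2$ for every $v \mid \infty$. Applying Lemma \ref{lUnitCount} with $X$ replaced by $X^2$ (which is valid for $X \geq 1$; the statement is trivial otherwise), we obtain that the number of such units is $\ll_{\epsilon} (X^2)^{\epsilon/2} = X^{\epsilon}$, as desired.

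There is no real obstacle here: the entire argument is a two-line height manipulation followed by an invocation of Lemma \ref{lUnitCount}. The only subtlety is the verification of $H_F(\alpha^{-1}) = H_F(\alpha)$ via the product formula, which is standard but worth recording explicitly because it is what prevents the bound from degrading to something like $H_F(\alpha)^{[F:\Q]}$.
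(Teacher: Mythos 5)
Your proof is correct and follows essentially the same route as the paper's: both arguments use the product-formula identity $H_F(\alpha^{-1})=H_F(\alpha)$ to deduce $|u|_v^{d_v}\leq X^2$ at every archimedean place (the paper does this place by place, you do it via global submultiplicativity of the height, a cosmetic difference) and then conclude by Lemma \ref{lUnitCount} applied with $X^2$.
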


\begin{proof}
We have that $|\alpha|_v^{d_v} \geq \frac{1}{X}$ for all $v \mid \infty$, since otherwise $H_F(\alpha) = H_F(\alpha^{-1}) > X$. Therefore the condition $H_F(\alpha u) \leq X$ forces $|u|_v^{d_v} \leq X^2$ for all $v \mid \infty$. Since $d_v \geq 1$, the desired result follows from Lemma \ref{lUnitCount}.
\end{proof}

\begin{lemma}
\label{lWidmer}
If $\epsilon > 0$, then $|\{\alpha \in \OO_F : H_F(\alpha) \leq X\}| \ll_\epsilon X^{1 + \epsilon}$.
\end{lemma}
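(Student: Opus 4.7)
The plan is to sort the elements of $\{\alpha \in \OO_F : H_F(\alpha) \leq X\}$ according to the principal ideal they generate and estimate each contribution separately. The element $\alpha=0$ contributes $1$, so from here on I restrict to nonzero $\alpha\in\OO_F$ with $H_F(\alpha)\leq X$.

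The first step is to show that any such $\alpha$ generates an ideal of norm at most $X$. Integrality forces $|\alpha|_v \leq 1$ at every finite place $v$, and with the normalization of the archimedean absolute values fixed in Section \ref{sec:notation} one has
\[
H_F(\alpha)=\prod_{v\mid\infty}\max(1,|\alpha|_v^{d_v}) \geq \prod_{v\mid\infty}|\alpha|_v^{d_v} = |\Norm_{F/\Q}(\alpha)|,
\]
so $\Norm_{F/\Q}((\alpha))\leq X$. The number of integral ideals of $\OO_F$ of norm at most $X$ is $\ll_F X$ by the classical estimate on partial sums of the Dedekind zeta coefficients (the same kind of input used implicitly in \eqref{eqn:trivial_minkowski}), and \emph{a fortiori} the same bound holds for principal ideals.

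For each principal ideal $\mathfrak{a}$ that admits at least one generator in $\OO_F$ of height $\leq X$, I would fix one such generator $\alpha_\mathfrak{a}$. Every other generator of $\mathfrak{a}$ has the form $u\alpha_\mathfrak{a}$ for a unique $u\in\OO_F^\times$, and Lemma \ref{lGeneratorCount} applied with $\alpha=\alpha_\mathfrak{a}$ shows that at most $O_\epsilon(X^\epsilon)$ choices of $u$ satisfy $H_F(u\alpha_\mathfrak{a})\leq X$. Multiplying the $\ll_F X$ possible ideals by the $\ll_\epsilon X^\epsilon$ generators per ideal yields the claimed bound $\ll_\epsilon X^{1+\epsilon}$.

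I do not anticipate a genuine obstacle here: the argument is a clean two-step reduction from elements to ideals to units, every ingredient is already available (Lemma \ref{lGeneratorCount} is given, and the ideal-counting bound is classical), and the unit count contributes only an $X^\epsilon$ loss because units of bounded archimedean size are sparse (Lemma \ref{lUnitCount}). The only mild point to verify is that the fixed generator $\alpha_\mathfrak{a}$ can be chosen inside the set being counted, so that Lemma \ref{lGeneratorCount} applies with this $\alpha_\mathfrak{a}$; this is built into the partition, since we only count an ideal class if it meets the set in question.
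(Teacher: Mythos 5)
Your proof is correct, but it takes a genuinely different route from the paper, which disposes of the lemma in one line by citing Widmer's general counting theorem for points of bounded height (\cite[Theorem 1.1 with $e=n=1$]{Widmer_points}), with a remark reconciling the absolute and relative heights. Your argument is self-contained: for nonzero $\alpha\in\OO_F$ the finite places contribute nothing to $H_F(\alpha)$, so $H_F(\alpha)\geq\prod_{v\mid\infty}|\alpha|_v^{d_v}=|\Norm_{F/\Q}(\alpha)|$ and the ideal $(\alpha)$ has norm at most $X$; the classical bound gives $\ll_F X$ such ideals, and Lemma \ref{lGeneratorCount} (whose hypothesis $H_F(\alpha_{\mathfrak{a}})\leq X$ you correctly arrange by choosing the reference generator from the set being counted) caps the number of admissible generators per ideal at $O_\epsilon(X^\epsilon)$. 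This yields $\ll_\epsilon X^{1+\epsilon}$ as claimed. What the paper's citation buys is brevity and access to a much more general statement (asymptotics for points of bounded height in arbitrary dimension); what your argument buys is transparency and economy of inputs---it uses only the Everest--Loxton unit count already invoked in Lemma \ref{lUnitCount} plus the standard ideal-counting estimate, and it sidesteps the absolute-versus-relative height bookkeeping entirely. Both are valid; yours would serve as a drop-in replacement if one wished to avoid the external reference.
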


\begin{proof}
This follows from work of Widmer \cite[Theorem 1.1 with $e = n = 1$]{Widmer_points}. Note that Widmer works with the absolute height, while we work with the relative height.
\end{proof}

\subsection{The results}
Recall that $K\in \mathcal{S}$, as in Section \ref{sec:main_lemma}. We estimate the average of $|S_\ell(K, Z)|$ using the following lemma. Given $\alpha \in K$, we write $f_\alpha(X) \in k[X]$ for the unique monic minimal polynomial of $\alpha$ over $k$. In this subsection, we prove variants of \cite[Lemmata 4.1 and 4.2]{FW} for general extensions $K/k$.

\begin{lemma}
\label{lMinimal}
Let $K\in \mathcal{S}$ and $\alpha \in K$. If there exist distinct $\mathfrak{P}_1,\mathfrak{P}_2\in\mathcal{P}_K^{(1)}$ such that $\alpha \mathcal{O}_K = (\mathfrak{P}_1 \mathfrak{P}_2^{-1})^\ell$, then $K = k(\alpha)$, and there exist unique $a_1, \dots, a_d \in k$ such that
\begin{equation}
\label{eqn:min_poly}
f_\alpha(X) = X^d + a_1 X^{d - 1} + \dots + a_d.
	\end{equation}
The elements $a_i$ have the following properties.
\begin{itemize}
\item If $1\leq i\leq d$, then $H_k(a_i) \ll H_K(\alpha)$.
\item The fractional ideal $a_d\OO_k$ is an $\ell$-th power.
\item If $v \in \Omega_k$ is a finite place such that $v(a_i) < 0$, then $v$ lies below $\mathfrak{P}_2$.
\end{itemize}
\end{lemma}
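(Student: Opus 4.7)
The four claims are essentially independent; the plan is to establish the structural identity $K=k(\alpha)$ first, then derive the three assertions about $f_\alpha(X)=X^d+a_1X^{d-1}+\dots+a_d$ by working with the Galois conjugates of $\alpha$ in the Galois closure $\widetilde{K}/k$. For $K=k(\alpha)$, set $k':=k(\alpha)$ and argue by contradiction, assuming $k'\subsetneq K$. Write $\alpha\mathcal{O}_{k'}=\mathfrak{a}\mathfrak{b}^{-1}$ with coprime integral ideals $\mathfrak{a},\mathfrak{b}$ of $\mathcal{O}_{k'}$; extending to $\mathcal{O}_K$ preserves coprimality, so $\mathfrak{a}\mathcal{O}_K=\mathfrak{P}_1^\ell$ and $\mathfrak{b}\mathcal{O}_K=\mathfrak{P}_2^\ell$. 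Factoring $\mathfrak{a}=\prod_i\mathfrak{q}_i^{c_i}$ into primes of $\mathcal{O}_{k'}$ and comparing with the one-prime factorization of $\mathfrak{P}_1^\ell$, only a single prime $\mathfrak{q}:=\mathfrak{P}_1\cap k'$ appears, and $\mathfrak{P}_1$ is the unique prime of $K$ above $\mathfrak{q}$. The hypothesis $\mathfrak{P}_1\in\mathcal{P}_K^{(1)}$ gives $e(\mathfrak{P}_1/\mathfrak{p}_1)=f(\mathfrak{P}_1/\mathfrak{p}_1)=1$, where $\mathfrak{p}_1:=\mathfrak{P}_1\cap k$, and multiplicativity in the tower $k\subseteq k'\subseteq K$ forces $e(\mathfrak{P}_1/\mathfrak{q})=f(\mathfrak{P}_1/\mathfrak{q})=1$, so $[K:k']=e(\mathfrak{P}_1/\mathfrak{q})f(\mathfrak{P}_1/\mathfrak{q})=1$, a contradiction.

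Once $K=k(\alpha)$, I would write $f_\alpha(X)=\prod_{j=1}^d(X-\alpha_j)$ in $\widetilde{K}$ with $\alpha_j$ the conjugates of $\alpha$, so $a_i=(-1)^i e_i(\alpha_1,\dots,\alpha_d)$ is the $i$-th elementary symmetric polynomial in the conjugates. For the height bound, Galois-invariance of the absolute Weil height $H$ gives $H(\alpha_j)=H(\alpha)$; a place-by-place estimate bounding $|e_i|_v\leq\binom{d}{i}\prod_j\max(1,|\alpha_j|_v)$ at archimedean $v$ and using the strong triangle inequality at finite $v$ yields $H(a_i)\leq\binom{d}{i}H(\alpha)^d$, which converts using $H_k(a_i)=H(a_i)^{[k:\Q]}$ and $H_K(\alpha)=H(\alpha)^{d[k:\Q]}$ into the claimed $H_k(a_i)\ll_d H_K(\alpha)$. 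The $\ell$-th power property is immediate from $a_d=(-1)^d\Norm_{K/k}(\alpha)$: using multiplicativity of the norm on fractional ideals, $a_d\mathcal{O}_k=\Norm_{K/k}(\alpha\mathcal{O}_K)=\Norm_{K/k}((\mathfrak{P}_1\mathfrak{P}_2^{-1})^\ell)=\bigl(\Norm_{K/k}(\mathfrak{P}_1)\Norm_{K/k}(\mathfrak{P}_2)^{-1}\bigr)^\ell$.

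For the support statement, suppose $v=v_\mathfrak{p}$ is a finite place of $k$ with $v(a_i)<0$, and fix a prime $\mathfrak{Q}$ of $\widetilde{K}$ above $\mathfrak{p}$. The non-archimedean inequality applied to the elementary symmetric expression gives $v_\mathfrak{Q}(a_i)\geq i\min_j v_\mathfrak{Q}(\alpha_j)$, so some conjugate $\alpha_j$ has $v_\mathfrak{Q}(\alpha_j)<0$. Since $\alpha$ has negative valuation only at $\mathfrak{P}_2$ in $K$, unwinding through the embedding $\tau_j\colon K\hookrightarrow\widetilde{K}$ forces $\tau_j^{-1}(\mathfrak{Q}\cap\tau_j(K))=\mathfrak{P}_2$, and intersecting the identity $\tau_j(\mathfrak{P}_2)=\mathfrak{Q}\cap\tau_j(K)$ with $k$ yields $\mathfrak{p}=\mathfrak{P}_2\cap k$. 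The hardest step is $K=k(\alpha)$, where the $\mathcal{P}_K^{(1)}$ hypothesis enters essentially via the tower law for ramification and inertia; the remaining parts reduce to standard manipulations with Weil heights, the ideal norm, and the non-archimedean triangle inequality in the Galois closure.
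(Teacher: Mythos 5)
Your proposal is correct and follows essentially the same route as the paper: a contradiction argument for $K=k(\alpha)$ driven by the $e=f=1$ hypothesis on $\mathfrak{P}_1$ (the paper exhibits an extra prime dividing $\alpha\mathcal{O}_K$, you equivalently force $[K:k(\alpha)]=1$ via the fundamental identity), followed by Vieta/elementary symmetric function estimates for the height bound, the norm identity for the $\ell$-th power claim, and the ultrametric inequality on conjugates for the denominator support. Your write-up is simply a more explicit version of the paper's terser proof.
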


\begin{proof}
First, we assert that $\alpha$ has degree $d$ over $k$, which immediately implies that $K = k(\alpha)$. Indeed, if the claim is false, then there exists a proper subfield $K'\subseteq K$ such that $\alpha\in K'$. Writing $\mathfrak{q}$ for the prime ideal of $K'$ below $\mathfrak{P}_1$, we see that $\mathfrak{q}$ divides $\alpha \mathcal{O}_{K'}$. But since $e(\mathfrak{P}_1) = f(\mathfrak{P}_1) = 1$, there exists another prime ideal $\mathfrak{P}$ of $K$ above $\mathfrak{q}$, which must then divide $\alpha \mathcal{O}_K$ as well. This is a contradiction; therefore, our assertion is true.

Recall that $f_\alpha(X) \in k[X]$ is the minimal polynomial of $\alpha$ over $k$. Since $\alpha$ has degree $d$ over $k$, there exist unique $a_1, \dots, a_d \in k$ such that $f_{\alpha}(X)$ is of the form \eqref{eqn:min_poly}.  Writing $\alpha = \alpha^{(1)}, \dots, \alpha^{(d)} \in \overline{\Q}$ for the conjugates of $\alpha$ over $k$, we have
\[
f_\alpha(X) = \prod_{i = 1}^d (X - \alpha^{(i)}).
\]
Vieta's formulas relating the numbers $\alpha^{(1)}, \ldots, \alpha^{(d)}$ to the coefficients $a_1, \ldots, a_d$ imply that $H_k(a_i) = H_K(a_i)^{1/d} \ll H_K(\alpha)$. Furthermore, since $a_d = \prod_{i = 1}^d -\alpha^{(i)} = (-1)^d \mathrm{N}_{K/k}(\alpha)$, we see that $a_d\OO_k$ is the $\ell$-th power of an ideal. Finally, suppose that $v(a_i) < 0$. Using Vieta's formulas once more, we deduce that there exists some place $w$ of $\widetilde{K}$ above $v$ and some $j$ such that $w(\alpha^{(j)}) < 0$. Therefore there exists some place $\widetilde{w}$ of $\widetilde{K}$ above $v$ such that $\widetilde{w}(\alpha) < 0$. This implies that $\widetilde{w}$ lies above $\mathfrak{P}_2$, finishing the proof.
\end{proof}

\begin{proof}[Proof of Lemma \ref{letaBound}]
Write $T$ for the set of all $\alpha \in \overline{\Q}$ such that there exist distinct prime ideals $\mathfrak{P}_1, \mathfrak{P}_2\in\mathcal{P}_K^{(1)}$ such that $k(\alpha) \in \mathcal{S}$ and $\alpha \mathcal{O}_{k(\alpha)} = (\mathfrak{P}_1 \mathfrak{P}_2^{-1})^\ell$. We define
\[
N_H(Z) := \{\alpha \in T : H_{k(\alpha)}(\alpha) \leq Z\}.
\]
By Lemma \ref{lMinimal}, the map $N_H(Z) \rightarrow \bigsqcup_{K\in \mathcal{S}} S_\ell(K, Z)$ that sends $\alpha$ to $(k(\alpha), \alpha)$ is surjective.  If $\alpha \in T$, then Lemma \ref{lMinimal} shows that its minimal polynomial $f_\alpha$ is of the shape $f_\alpha(X) = X^d + a_1 X^{d - 1} + \dots + a_d$ with $H_k(a_i) \ll H_{k(\alpha)}(\alpha)$, $a_d\OO_k$ an $\ell$-th power and $v(a_i) < 0$ implies that $\mathfrak{P}_2$ lies above $v$.  In order to estimate the number of such polynomials $f_\alpha$, we use Lemma \ref{lFraction} (with $F=k$) to write each $a_i$ as
\[
a_i = t_i/t_i', \quad H_k(t_i), H_k(t_i') \ll H_k(a_i) \ll H_{k(\alpha)}(\alpha) \leq Z, \quad \gcd((t_i), (t_i')) = \mathfrak{C}_{i}.
\]
By Lemma \ref{lWidmer} (with $F=k$), there are at most $\ll_{\epsilon} Z^{d - 1 + \epsilon}$ possibilities for $t_1, \dots, t_{d - 1}$. Furthermore, each of $t_1', \dots, t_{d - 1}'$ divides $\Norm_{k(\alpha)/k}(\mathfrak{P}_2)^\ell \prod_{i = 1}^h \mathfrak{C}_i$. Note that $\Norm_{k(\alpha)/\Q}(\mathfrak{P}_2)^\ell \leq H_{k(\alpha)}(\alpha) \leq Z$, thus $\Norm_{k(\alpha)/\Q}(\mathfrak{P}_2) \leq Z^{1/\ell}$. The number of ideals of $k$ with norm up to $Z^{1/\ell}$ is bounded by $\ll Z^{1/\ell}$. Since $t_1', \dots, t_{d - 1}'$ all divide $\Norm_{k(\alpha)/k}(\mathfrak{P}_2)^\ell \prod_{i = 1}^h \mathfrak{C}_i$, Lemma \ref{lGeneratorCount} gives $\ll_\epsilon Z^{1/\ell + \epsilon}$ possibilities for $t_1', \dots, t_{d - 1}'$. Arguing similarly for $a_d$ produces another $\ll_\epsilon Z^{1/\ell + \epsilon}$ possibilities, completing the proof of the lemma.
\end{proof}

\section{Proof of Theorem \ref{thm:main}}
\label{sec:proofs}
Throughout this section, we fix integers $n,\ell\geq 2$, a prime $p$, and a number field $k$.  Recall \eqref{eqn:families} and \eqref{eqn:prime_counting}.  Let the ordered pair $(\mathscr{F}_k,d)$ equal $(\mathscr{F}_k^{n,S_n},n)$ or $(\mathscr{F}_k^{p},p)$, and let $\mathcal{S}\subseteq\mathscr{F}_k$.  We may assume that $\alpha_{\mathcal{S}}>0$; otherwise, Theorem \ref{thm:main} is trivial.  Let $Q\geq 1$, let $0<\epsilon<\min\{\frac{1}{2},\alpha_{\mathcal{S}}\}$ be an arbitrarily small quantity, and define
\[
R = \ell(d-1)+1.
\]

It remains to bound for $\pi_K^{(1)}(Z)$ in Theorem \ref{tFW} from below.  To obtain such a lower bound on average, we use the work of Lemke Oliver, Thorner, and Zaman \cite{LOTZ}.

\begin{theorem}
\label{thm:LOTZ}
Recall \eqref{eqn:prime_counting}.  There exists an effectively computable constant $\Cl[abcon]{LOTZ_n}=\Cr{LOTZ_n}(k,n)>0$ such that for all $K\in \mathscr{F}_k^{n,S_n}(Q)$ with $O_{\epsilon}(Q^{\epsilon})$ exceptions, there holds
\[
\pi_K(x) \geq 2\Cr{LOTZ_n} x/\log x,\qquad x\geq (\log D_K)^{42(n!)^2/\epsilon}.
\]
Also, there exists an effectively computable constant $\Cl[abcon]{LOTZ_p}=\Cr{LOTZ_p}(k,p)>0$ such that if $G$ is a transitive subgroup of $S_p$, then for all $K\in\mathscr{F}_k^{p,G}(Q)$ with $O_{\epsilon}(Q^{\epsilon})$ exceptions, there holds
\[
\pi_K(x) \geq 2\Cr{LOTZ_p} x/\log x,\qquad x\geq (\log D_K)^{42(p!)^2/\epsilon}.
\]
\end{theorem}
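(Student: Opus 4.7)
The plan is to deduce Theorem~\ref{thm:LOTZ} as a direct consequence of the work of Lemke Oliver, Thorner, and Zaman~\cite{LOTZ}, which is designed to furnish exactly this kind of family-uniform effective Chebotarev lower bound. The starting point is the factorization
\[
\zeta_K(s) = \zeta_k(s)\prod_{\rho}L(s,\rho,\widetilde{K}/k)^{\langle \rho,\,\mathrm{Ind}_H^G \mathbf{1}\rangle},
\]
where $G=\mathrm{Gal}(\widetilde{K}/k)$ (isomorphic to $S_n$, or transitive in $S_p$), $H\le G$ is the stabilizer of an embedding $K\hookrightarrow\widetilde{K}$, and $\rho$ runs over nontrivial irreducible representations of $G$ occurring in $\mathrm{Ind}_H^G \mathbf{1}$. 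This reduces the analytic behavior of $\pi_K(x)$ to the distribution of zeros of the Artin $L$-functions $L(s,\rho,\widetilde{K}/k)$.

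The next step is a standard Lagarias--Odlyzko style explicit formula, producing an expression of the shape
\[
\pi_K(x) = \mathrm{Li}(x) - \sum_{\rho}\sum_{\beta_\rho}\mathrm{Li}(x^{\beta_\rho}) + \text{lower-order terms},
\]
where the inner sum runs over the nontrivial zeros of $L(s,\rho,\widetilde{K}/k)$ in a box of bounded height. The desired lower bound $\pi_K(x) \geq 2\Cr{LOTZ_n} x/\log x$ reduces to ruling out (a) a Siegel-type real zero $\beta$ near $s=1$, and (b) an anomalous pile-up of nontrivial zeros close to the line $\Real(s)=1$.

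The hard inputs (a) and (b) are exactly what \cite{LOTZ} delivers: an unconditional holomorphy and nonvanishing statement for $L(s,\rho,\widetilde{K}/k)$ at the edge of the critical strip, together with a log-free zero-density estimate asserting that the number of $K\in\mathcal{S}(Q)$ for which some $L(s,\rho,\widetilde{K}/k)$ has a zero with $\Real(s)\geq 1 - c/(\log D_K)^A$ (for explicit constants $c,A$ depending on $n$ or $p$ and $\epsilon$) is $O_\epsilon(Q^\epsilon)$. Combining these with the explicit formula and taking the threshold $x \geq (\log D_K)^{42(n!)^2/\epsilon}$ (respectively $(\log D_K)^{42(p!)^2/\epsilon}$) forces the main term to dominate, yielding the stated lower bound for all $K\in\mathcal{S}(Q)$ outside an exceptional set of size $O_\epsilon(Q^\epsilon)$.

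The main obstacle is precisely the log-free zero-density estimate: producing it without assuming GRH or the full Artin conjecture is the principal technical achievement of \cite{LOTZ}, and it is what plays the role of a family-uniform substitute for GRH. Once those zero-density and holomorphy inputs are granted, the passage from zeros of Artin $L$-functions to a pointwise lower bound for $\pi_K(x)$ is classical, and the explicit exponent $42(n!)^2/\epsilon$ simply encodes the standard quantitative trade-off between the width of the exceptional zero-free region and the cutoff height chosen in the explicit formula.
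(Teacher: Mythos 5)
Your proposal is correct and matches the paper's approach: the paper's entire proof of Theorem \ref{thm:LOTZ} is the single citation ``This is established during the proof of \cite[Theorem 2.4]{LOTZ},'' and your sketch is an accurate account of what that reference supplies (the factorization of $\zeta_K$ into Artin $L$-functions, the family-uniform zero-free regions and log-free zero-density estimate outside an $O_\epsilon(Q^\epsilon)$ exceptional set, and the standard explicit-formula deduction). Since the heavy analytic input is entirely outsourced to \cite{LOTZ} in both your write-up and the paper, there is nothing further to check here.
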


\begin{proof}
This is established during the proof of \cite[Theorem 2.4]{LOTZ}.
\end{proof}

\begin{corollary}
\label{cor:LOTZ}
There exists an effectively computable constant $\Cl[abcon]{LOTZ_d}=\Cr{LOTZ_d}(d,k)>0$ and a subset $\mathscr{E}_k(Q)\subseteq\mathscr{F}_k(Q)$ satisfying $|\mathscr{E}_k(Q)|\ll_{\epsilon}Q^{\epsilon}$ such that if $K\in \mathcal{S}(Q)-\mathscr{E}_k(Q)$, then
\[
\pi_K^{(1)}(x) \geq \Cr{LOTZ_d} x/\log x,\qquad x\geq (\log D_K)^{42(d!)^2/\epsilon}.
\]
\end{corollary}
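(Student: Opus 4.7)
The plan is to derive this from Theorem~\ref{thm:LOTZ} in two moves: (i) reduce from arbitrary $\mathcal{S}\subseteq \mathscr{F}_k$ to the families already handled by Theorem~\ref{thm:LOTZ}, and (ii) convert the lower bound on $\pi_K(x)$ to one on $\pi_K^{(1)}(x)$ by controlling the primes of $K$ that are not of relative degree one over $k$. For the reduction, if $(\mathscr{F}_k,d)=(\mathscr{F}_k^{n,S_n},n)$ then the inclusion $\mathcal{S}(Q)\subseteq\mathscr{F}_k^{n,S_n}(Q)$ lets me invoke the first half of Theorem~\ref{thm:LOTZ} directly and take $\mathscr{E}_k(Q)$ to be its exceptional set. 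If $(\mathscr{F}_k,d)=(\mathscr{F}_k^{p},p)$, I partition $\mathscr{F}_k^{p}=\bigsqcup_{G}\mathscr{F}_k^{p,G}$ over the finitely many transitive subgroups $G\subseteq S_p$ (any such $G$ has order divisible by $p$, so only finitely many arise), apply the second half of Theorem~\ref{thm:LOTZ} to each piece, and take $\mathscr{E}_k(Q)$ to be the union of the resulting exceptional sets, which is still of size $\ll_\epsilon Q^\epsilon$.

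For the conversion, the key observation is that a prime $\mathfrak{P}\in \mathcal{P}_K\setminus \mathcal{P}_K^{(1)}$ lying above $\mathfrak{p}\in \mathcal{P}_k$ with $\mathrm{N}_{K/\mathbb{Q}}\mathfrak{P}\le x$ falls in one of two categories: either $f(\mathfrak{P}/\mathfrak{p})\ge 2$, forcing $\mathrm{N}_{k/\mathbb{Q}}\mathfrak{p}\le x^{1/2}$ with at most $d$ primes of $K$ above each such $\mathfrak{p}$ (contributing $\ll_{d,k}x^{1/2}$ in total); or $\mathfrak{p}$ ramifies in $K/k$, contributing $\ll_d \omega(D_K)\ll \log D_K$ via the standard bound on the number of prime divisors of the discriminant. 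Combining these gives
\[
\pi_K(x)-\pi_K^{(1)}(x)\ll_{d,k}x^{1/2}+\log D_K.
\]

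To conclude, for $K\in \mathcal{S}(Q)-\mathscr{E}_k(Q)$ and $x\ge (\log D_K)^{42(d!)^2/\epsilon}$, Theorem~\ref{thm:LOTZ} yields $\pi_K(x)\ge 2\Cr{LOTZ_n}x/\log x$ (or with $\Cr{LOTZ_p}$ in place of $\Cr{LOTZ_n}$), while the threshold on $x$ forces $\log D_K\le x^{\epsilon/(42(d!)^2)}$, so both error terms $x^{1/2}$ and $\log D_K$ are comfortably $o(x/\log x)$; the finitely many fields of small discriminant (by Hermite's theorem) can be absorbed into $\mathscr{E}_k(Q)$ at no asymptotic cost. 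Taking $\Cr{LOTZ_d}=\Cr{LOTZ_n}$ (resp.\ $\Cr{LOTZ_p}$) then gives the claim. I do not anticipate a serious obstacle: the $\pi_K\to\pi_K^{(1)}$ conversion is routine, and the rapid growth of the threshold $(\log D_K)^{42(d!)^2/\epsilon}$ easily dominates the $x^{1/2}+\log D_K$ error. The one technical point that requires a little care is verifying that the union over the transitive subgroups $G\subseteq S_p$ in the second case preserves the $\ll_\epsilon Q^\epsilon$ bound on $|\mathscr{E}_k(Q)|$, which follows since the number of such $G$ is bounded in terms of $p$ alone.
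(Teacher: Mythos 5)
Your proposal is correct and follows essentially the same route as the paper: invoke Theorem~\ref{thm:LOTZ} (partitioning $\mathscr{F}_k^p$ over the $O_p(1)$ transitive subgroups of $S_p$ in the prime-degree case) and then discard the primes outside $\mathcal{P}_K^{(1)}$, which contribute $O(\sqrt{x}+\log D_K)$, an amount negligible against $x/\log x$ in the stated range of $x$. If anything, your treatment is slightly more careful than the paper's, which lumps the ramified primes into the $O(\sqrt{x})$ count rather than bounding them separately by $O(\log D_K)$ as you do.
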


\begin{proof}
First, let $(\mathscr{F}_k,d)=(\mathscr{F}_k^{n,S_n},n)$.  If $\mathfrak{P}\in\mathcal{P}_K-\mathcal{P}_K^{(1)}$, then the relative degree (over $k$) of $\mathfrak{P}$ is at least 2.  Consequently, the absolute degree (over $\Q$) of $\mathfrak{P}$ is at least 2.  There are $O(\sqrt{x})$ such prime ideals $\mathfrak{P}$ such that $\mathrm{N}_{K/\Q}\mathfrak{P}\leq x$.  The corollary now follows from Theorem \ref{thm:LOTZ}.  If $(\mathscr{F}_k,d)=(\mathscr{F}_k^{p},p)$, then we apply this reasoning to $\mathscr{F}_k^{p,G}$ for each transitive subgroup of $S_p$.  Since there are $O_p(1)$ such subgroups, the theorem follows.
\end{proof}

In a manner inspired by \cite[Section 6]{HBP}, we use Corollary \ref{cor:LOTZ} to bound the frequency with which $|\mathrm{Cl}_K[\ell]|$ can be large. Define $A_\ell(Q; H) := \{K\in \mathcal{S}\colon |\mathrm{Cl}_K[\ell]| \geq H\}$.

\begin{proposition}
\label{pAHQ}
If $H,Q\geq 1$, then $|A_\ell(Q; H)| \ll_\epsilon \min\{|\mathcal{S}(Q)|,Q^{R/2 + \epsilon}/H^{R}\}$.
\end{proposition}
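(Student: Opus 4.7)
The first bound $|A_\ell(Q;H)|\le|\mathcal{S}(Q)|$ is immediate, as $A_\ell(Q;H)\subseteq\mathcal{S}(Q)$. For the other bound $|A_\ell(Q;H)|\ll_\epsilon Q^{R/2+\epsilon}/H^R$, my plan is to combine the three inputs already established: Theorem \ref{tFW}, Corollary \ref{cor:LOTZ}, and Lemma \ref{letaBound}. The architecture is inspired by \cite[Section 6]{HBP}: fix one parameter $Z$, use Theorem \ref{tFW} together with the lower bound for $\pi_K^{(1)}(Z)$ to force $|S_\ell(K,\cdot)|$ to be pointwise large at every $K\in A_\ell(Q;H)$, and then run the average bound from Lemma \ref{letaBound} in the reverse direction as a counting estimate for $|A_\ell(Q;H)|$.

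Concretely, I would first discard the exceptional set $\mathscr{E}_k(Q)$ of Corollary \ref{cor:LOTZ}, whose contribution $\ll_\epsilon Q^\epsilon$ is absorbable. For each remaining $K\in A_\ell(Q;H)\setminus\mathscr{E}_k(Q)$, I would pick $Z\asymp Q^{1/2+\epsilon}/H$, large enough to exceed the admissibility threshold $(\log(2Q))^{42(d!)^2/\epsilon}$ of Corollary \ref{cor:LOTZ}. Theorem \ref{tFW} then gives
\[
H\le |\mathrm{Cl}_K[\ell]|\ll_\epsilon \frac{Q^{1/2+\epsilon}}{\pi_K^{(1)}(Z)}+\frac{Q^{1/2+\epsilon}\,|S_\ell(K,\Cr{tFW}Z^\ell)|}{\pi_K^{(1)}(Z)^2},
\]
and inserting $\pi_K^{(1)}(Z)\gg Z/\log Z$ from Corollary \ref{cor:LOTZ} drops the first summand below $H/2$, yielding the pointwise lower bound
\[
|S_\ell(K,\Cr{tFW}Z^\ell)|\gg \frac{HZ^2}{Q^{1/2+\epsilon}(\log Z)^2}.
\]
Summing over $K\in A_\ell(Q;H)\setminus\mathscr{E}_k(Q)$ and invoking Lemma \ref{letaBound} at scale $\Cr{tFW}Z^\ell$,
\[
|A_\ell(Q;H)|\cdot\frac{HZ^2}{Q^{1/2+\epsilon}(\log Z)^2}\ll_\epsilon Z^{\ell(d-1)+2+\ell\epsilon}.
\]
Solving for $|A_\ell(Q;H)|$ and substituting $Z\asymp Q^{1/2+\epsilon}/H$, the exponent of $Q$ collapses to $\tfrac12+\tfrac{\ell(d-1)}{2}=R/2$ and the exponent of $H$ becomes $1+\ell(d-1)=R$, producing $|A_\ell(Q;H)|\ll_\epsilon Q^{R/2+O(\epsilon)}/H^R$; absorbing the $O(\epsilon)$ and the extra $(\log Q)^{O(1)}$ into a renamed $\epsilon$ concludes the bound.

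The main obstacle is the edge case where the prescribed $Z\asymp Q^{1/2+\epsilon}/H$ falls below the admissibility threshold $(\log(2Q))^{42(d!)^2/\epsilon}$ of Corollary \ref{cor:LOTZ}, which happens in a narrow window where $H$ is within logarithmic factors of the Minkowski ceiling $Q^{1/2+\epsilon}$. In that window I would bypass Theorem \ref{tFW} entirely and use the trivial bound $|\mathrm{Cl}_K[\ell]|\ll_\epsilon D_K^{1/2+\epsilon}$ to conclude that $A_\ell(Q;H)=\varnothing$ once $H$ exceeds a sufficiently large constant multiple of $Q^{1/2+\epsilon}$, and check that inside the intervening log-band the target $Q^{R/2+\epsilon}/H^R$ is already $\gg 1$ so that the $O_\epsilon(Q^\epsilon)$ contribution from $\mathscr{E}_k(Q)$ alone suffices. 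Beyond this edge case, the only careful piece of work is tracking the $\epsilon$-losses through the substitution for $Z$ (in particular, ensuring that the $Z^{\ell\epsilon}$ factor is absorbed cleanly), and the rest of the argument is mechanical.
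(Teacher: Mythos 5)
Your proposal is correct and follows essentially the same route as the paper: the paper also takes $Z\approx Q^{1/2+\epsilon'}/H$, combines Theorem \ref{tFW} with Corollary \ref{cor:LOTZ} and Lemma \ref{letaBound}, and absorbs the $D_K^{1/2+\epsilon}/\pi_K^{(1)}(Z)$ term (it does so after summing over $K\in A_\ell(Q;H)$, arriving at a self-referential inequality that it then solves, whereas you absorb it pointwise at each $K$ --- a cosmetic difference). The one spot requiring the $\epsilon$-care you already flag is the claim that the first summand drops below $H/2$: with the \emph{same} $\epsilon$ in $Z\asymp Q^{1/2+\epsilon}/H$ and in the numerator $Q^{1/2+\epsilon}$, that term is only $O(H\log Z)$, so the exponent in the numerator must be taken strictly smaller (the paper uses $D_K^{1/2+\epsilon/4R}$ against $Z=Q^{1/2+\epsilon/R}/H$) to gain the factor $Q^{-c\epsilon}$ that beats the logarithm.
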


\begin{proof}
The bound $|A_{\ell}(Q; H)|\leq |\mathcal{S}(Q)|$ is trivial.  For the remaining part, it follows from \eqref{eqn:trivial_minkowski} that $|A_\ell(Q; H)|=0$ unless $H\ll Q^{1/2}(\log Q)^{d[k:\Q]-1}$.  We begin with the estimate
\[
|A_{\ell}(Q;H)|\leq\frac{1}{H}\sum_{K\in A_{\ell}(Q;H)}|\mathrm{Cl}_K[\ell]|.
\]
Corollary \ref{cor:LOTZ} and \eqref{eqn:trivial_minkowski} imply that
\[
\frac{1}{H}\sum_{K\in A_{\ell}(Q;H)\cap \mathscr{E}_k(Q)}|\mathrm{Cl}_K[\ell]|\ll \frac{Q^{\frac{1}{2}}(\log Q)^{d[k:\Q]-1}}{H}.
\]
For the other $K$, it suffices to assume that $Q$ is sufficiently large in terms of $d$, $k$, $\ell$, and $\epsilon$. In particular, there exists a constant $\Cl[abcon]{beat_lower}=\Cr{beat_lower}(d,k,\ell,\epsilon)\geq 1$ such that if $Q\geq\Cr{beat_lower}$, then
\[
Z := Q^{\frac{1}{2}+\frac{\epsilon}{R}}/H>(\log Q)^{42(d!)^2/\epsilon}.
\]
We now apply Theorem \ref{tFW}, Lemma \ref{letaBound}, and Corollary \ref{cor:LOTZ} with this choice of $Z$:
\begin{align*}
\frac{1}{H}\sum_{K\in A_{\ell}(Q;H)-\mathscr{E}_k(Q)}|\mathrm{Cl}_K[\ell]|&\ll_{\epsilon}\frac{1}{H}\sum_{K\in A_{\ell}(Q;H)-\mathscr{E}_k(Q)}\Big(\frac{D_K^{\frac{1}{2}+\frac{\epsilon}{4R}}}{\pi_K^{(1)}(Z)}+\frac{D_K^{\frac{1}{2}+\frac{\epsilon}{4R}}|S_{\ell}(K,\Cr{tFW}Z^{\ell})|}{\pi_K^{(1)}(Z)^2}\Big)\\
&\ll_{\epsilon}  \frac{Q^{\frac{1}{2}+\frac{\epsilon}{4R}}(\log Z)^2}{H}\sum_{K\in A_{\ell}(Q;H)-\mathscr{E}_k(Q)}\Big(\frac{1}{Z}+\frac{|S_{\ell}(K,\Cr{tFW}Z^{\ell})|}{Z^2}\Big)\\
&\ll_{\epsilon}  \frac{Q^{\frac{1}{2}+\frac{\epsilon}{2R}}}{H}\Big(\frac{|A_{\ell}(Q;H)|}{Z}+Z^{R-1}\Big).
\end{align*}
We conclude that there exists a constant $\Cl[abcon]{5.3upper}=\Cr{5.3upper}(d,k,\ell,\epsilon)\geq 1$ such that
\[
|A_{\ell}(Q;H)|\leq \Cr{5.3upper}\frac{Q^{\frac{1}{2}+\frac{\epsilon}{2R}}}{H}\Big(\frac{|A_{\ell}(Q;H)|}{Z}+Z^{R-1}\Big).
\]
We may assume that $\Cr{beat_lower}\geq (\Cr{5.3upper}+1)^{2R/\epsilon}$.  Now, by our choice of $Z$, the lemma follows.
\end{proof}

\begin{proof}[Proof of Theorem \ref{thm:main}]
Define $B_\ell(Q;H) := \{K\in \mathcal{S}\colon H \leq |\mathrm{Cl}_K[\ell]| < eH\}$.  By \eqref{eqn:trivial_minkowski} and Proposition \ref{pAHQ}, there exists a constant $\Cl[abcon]{dyadic}=\Cr{dyadic}(d,k)>0$ such that if $J=\log(\Cr{dyadic} Q^{\frac{1}{2}}(\log Q)^{d[k:\Q]-1})$, then
\begin{equation}
\label{eFinalBound}
\sum_{K\in \mathcal{S}} |\mathrm{Cl}_{K}[\ell]|^r \leq \sum_{0\leq j\leq J} |B_\ell(Q;e^j)| \cdot (e^{j+1})^r \ll_{r,\epsilon}\sum_{0\leq j\leq J} \min\Big(\frac{Q^{\frac{R}{2} + \frac{\epsilon}{2r}}}{e^{jR}},|\mathcal{S}(Q)|\Big) e^{jr}.
\end{equation}
Define
\[
j_0 = \frac{1}{R}\log\Big(\frac{Q^{\frac{R}{2}+\frac{\epsilon}{2r}}}{|\mathcal{S}(Q)|}\Big).
\]
By \eqref{eqn:EV_count} and our range of $\epsilon$, there exists a constant $\Cl[abcon]{j0_lower}=\Cr{j0_lower}(\alpha_{\mathcal{S}},d,k,\ell,\epsilon)>0$ such that if $Q\geq \Cr{j0_lower}$, then $1<j_0<J$.  Therefore, \eqref{eFinalBound} is
\begin{align*}
\ll_{r,\epsilon}|\mathcal{S}(Q)|\sum_{0\leq j\leq j_0}e^{jr}+Q^{\frac{R}{2}+\frac{\epsilon}{2r}}\sum_{j_0<j\leq J}e^{j(r-R)}\ll_{r,\epsilon}|\mathcal{S}(Q)|e^{j_0r}+Q^{\frac{R}{2}+\frac{\epsilon}{2r}}\sum_{j_0<j\leq J}e^{j(r-R)}.
\end{align*}
Theorem \ref{thm:main} now follows from the bounds $|\mathcal{S}(Q)|e^{j_0 r}\ll_{r,\epsilon} |\mathcal{S}(Q)|^{1-\frac{r}{R}}Q^{\frac{r}{2}+\frac{\epsilon}{2R}}$ and
\[
Q^{\frac{R}{2}+\frac{\epsilon}{2r}}\sum_{j_0<j\leq J}e^{j(r-R)}\ll_{r,\epsilon} \begin{cases}
	|\mathcal{S}(Q)|^{1-\frac{r}{R}}Q^{\frac{r}{2}+\epsilon}&\mbox{if $r< R$,}\\
	Q^{\frac{r}{2}+\epsilon}&\mbox{if $r\geq R$.}
\end{cases}\qedhere
\]
\end{proof}

\bibliographystyle{abbrv}
\bibliography{Torsion}
\end{document}